\documentclass[12pt,dvipdfmx,a4paper]{article}
\usepackage[margin=25truemm]{geometry}
\usepackage[colorlinks=true,linkcolor=blue,citecolor=blue]{hyperref}
\usepackage{amssymb,amsthm,amsmath, amsfonts,ascmac, enumerate}
\usepackage{stmaryrd} 
\usepackage{mathtools} 
\usepackage{bm}
\numberwithin{equation}{section}

\theoremstyle{plain}
\newtheorem{theorem}{Theorem}[section]
\newtheorem{lemma}[theorem]{Lemma}
\newtheorem{proposition}[theorem]{Proposition}
\newtheorem{corollary}[theorem]{Corollary}

\theoremstyle{definition}
\newtheorem{definition}[theorem]{Definition}

\newtheorem{example}[theorem]{Example}

\newtheorem{remark}[theorem]{Remark}

\title{Convolution, cumulants and infinitesimal generators in the formal power series ring}
\author{Shuhei Tsujie \ and \  Yuki Ueda
}

\date{\today}

\begin{document}
\maketitle

\abstract{
We extend the notions of finite free convolution and finite free cumulants to the setting of formal power series by introducing their natural analogues, namely $t$-deformed convolution and $t$-deformed cumulants. In this framework, we establish $t$-deformed analogues of the law of large numbers and the central limit theorem, revealing structural parallels with classical, free, and finite free probability theories. 

We show that the case $t=-1$ recovers classical convolution at the level of moment generating functions, thereby connecting the theory directly to classical probability. 

We further investigate the infinitesimal generators associated with $\boxplus^t$-continuous semigroups, deriving explicit representation formulas that clarify how these generators describe the infinitesimal evolution of the semigroup. 
In the case $t = d$, our results yield explicit formulas for finite free infinitesimal generators. 
In the case $t = -1$, we relate these generators to those of one-dimensional L\'{e}vy processes by identifying the corresponding terms in their representations. 
This establishes a direct connection between $\boxplus^t$-convolution semigroups and classical L\'{e}vy--Khintchine-type generators.
}
	
\vspace{3mm}

{\footnotesize \textit{Keywords}: $t$-deformed convolution, $t$-deformed cumulants, $(t,r)$-deformed infinitesimal generators, finite free probability, L\'{e}vy processes}

{\footnotesize \textbf{MSC2020}: 46L54, 13F25, 33C20, 47B48, 47D03, 60F05}

\tableofcontents

\section{Introduction}

\subsection{Finite free convolution}
Let $\mathbb{R}[x]$ denote the polynomial ring over $\mathbb{R}$ and $\mathbb{R}[x]_{d}$ its subspace consisting of polynomials of degree at most $d$. 
For $f(x) = \sum_{i=0}^{d}a_{i}x^{d-i}$ and $g(x) = \sum_{i=0}^{d}b_{i}x^{d-i}$ in $\mathbb{R}[x]_{d}$, we define the \emph{finite free convolution} (or \emph{polynomial convolution}) of $f(x)$ and $g(x)$ by 
\begin{align*}
(f \boxplus_{d} g)(x) \coloneqq \sum_{k=0}^{d}\sum_{i+j=k}\dfrac{(d)_{\underline{k}}}{(d)_{\underline{i}}(d)_{\underline{j}}}a_{i}b_{j}x^{d-k}, 
\end{align*}
where $(d)_{\underline{k}} \coloneqq d(d-1) \cdots (d-k+1)$ denotes the Pochhammer symbol (falling factorial). 

In recent years, this convolution has also appeared in the study of characteristic polynomials of (random) matrices by Marcus, Spielman, and Srivastava \cite{marcus2021polynomial, marcus2022finite-ptarf}. More precisely, let $p$ and $q$ be characteristic polynomials of $d$-dimensional Hermitian matrices $A$ and $B$, respectively. Then the finite free convolution $p\boxplus_d q$ coincides with the expected characteristic polynomial of $A+UBU^\ast$, where $U$ is a random unitary matrix distributed according to the Haar measure on the unitary group $\mathcal{U}(d)$. This representation mirrors the random matrix model underlying free additive convolution, in which freely independent random variables arise as large-dimensional limits of unitarily invariant random matrices. In this sense, the finite free convolution $\boxplus_d$ can be viewed as a finite-dimensional analogue of the free additive convolution $\boxplus$, which describes the distribution of the sum of freely independent random variables, introduced by Voiculescu \cite{voiculescu1986addition, voiculescu2006symmetries}.  Actually, the finite free convolution $\boxplus_d$ approximates the free additive convolution $\boxplus$ as the degree $d$ tends to infinity; see \cite{marcus2021polynomial}. Furthermore, Arizmendi and Perales \cite[Corollary 5.5]{arizmendi2018cumulants-joctsa} provided a combinatorial proof of this convergence by introducing {\it finite free cumulants}, which serve as finite-dimensional analogues of the combinatorial structures arising in free probability. For details of combinatorial theory of free probability, we refer to the monograph by Nica and Speicher \cite{nica2006lectures}.

In the following, we introduce finite free cumulants and review the relevant results from \cite{arizmendi2018cumulants-joctsa}. Define 
$$
\mathbb{R}[x]_{{\rm monic},d} := \left\{f(x) = \sum_{k=0}^{d}a_{k}x^{d-k}\in \mathbb{R}[x]_{d}: a_{0}=1 \right\} .
$$
For $f \in \mathbb{R}[x]_{{\rm monic},d} $, the \emph{finite free cumulants} $\kappa_{1}^{(d)}(f), \dots, \kappa_{d}^{(d)}(f)$ are defined to be the numbers satisfying 
\begin{align*}
\sum_{i=1}^{d}\kappa_{i}^{(d)}(f)z^{i} \equiv -\dfrac{z}{d}\dfrac{d}{dz}\log\left(1 + \sum_{k=1}^{d}\dfrac{d^{k}}{(d)_{\underline{k}}}a_{k}z^{k}\right) \pmod{z^{d+1}}, 
\end{align*}
where $\log$ stands for the formal logarithm. 
Namely, 
\begin{align*}
\log(1+z) = \sum_{k=1}^{\infty}\dfrac{(-1)^{k-1}}{k}z^{k}. 
\end{align*}

Let $\lambda_{1}, \dots, \lambda_{d} \in \mathbb{C}$ denote the roots of $f(x)$. 
Define the $i$-th \emph{power sum} $p_{i}(f)$ by 
\begin{align*}
p_{i}(f) \coloneqq \lambda_{1}^{i} + \dots + \lambda_{d}^{i}. 
\end{align*}
Note that $p_{i}(f)$ is an equivalent notion to the $i$-th moment $\frac{1}{d}(\lambda_{1}^{i} + \dots + \lambda_{d}^{i})$ of the roots of $f(x)$. 

For $r \in \mathbb{R}$, the \emph{dilation} $D_{r}f(x)$ is defined by 
\begin{align*}
D_{r}f(x) \coloneqq \prod_{i=1}^{d}(x-r\lambda_{i}). 
\end{align*}

The finite free cumulants satisfy the following axioms that are analogous to the axioms for free cumulants, introduced by Lehner \cite{lehner2002free-ejoc}. 

\begin{theorem}[Arizmendi and Perales {\cite[Proposition 3.3]{arizmendi2018cumulants-joctsa}}]\label{Arizmensi-Perales}
Given $d \in \mathbb{Z}_{\ge1}$, the finite free cumulants $\kappa_{1}^{(d)}, \dots, \kappa_{d}^{(d)}$ are unique maps satisfying the following conditions. 
\begin{enumerate}[\rm (i)]
\item $\kappa_{i}^{(d)}(f)$ is a polynomial in $p_{1}(f), \dots, p_{i}(f)$ with leading term $\frac{d^{i-1}}{(d)_{\underline{i}}} p_{i}(f)$. 
That is, there exists a polynomial $F_{i}$ in $i-1$ variables such that 
$$
\kappa_{i}^{(d)}(f) = \frac{d^{i-1}}{(d)_{\underline{i}}}p_{i}(f) + F_{i}(p_{1}(f), \dots, p_{i-1}(f)).
$$ 
\item Homogeneity: $\kappa_{i}^{(d)}(D_{r}f) = r^{i}\kappa_{i}^{(d)}(f)$ for any $r \in \mathbb{R}$ and $i \in \mathbb{Z}_{\ge1}$.
\item Additivity: $\kappa_{i}^{(d)}(f \boxplus_{d} g) = \kappa_{i}^{(d)}(f) + \kappa_{i}^{(d)}(g)$. 
\end{enumerate}
\end{theorem}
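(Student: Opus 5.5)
Two things have to be shown: that the defined $\kappa_i^{(d)}$ satisfy (i)--(iii), and that any family of maps satisfying (i)--(iii) coincides with them. The natural tool is the normalized generating series
\[
A_f(z) \coloneqq 1+\sum_{k=1}^{d}\frac{d^k}{(d)_{\underline{k}}}a_k z^k ,
\]
since by definition $\sum_i \kappa_i^{(d)}(f)z^i \equiv -\frac{z}{d}\frac{d}{dz}\log A_f(z) \pmod{z^{d+1}}$.

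\textbf{Additivity.} I would first compute the normalized coefficients of $f\boxplus_d g$:
\[
\frac{d^k}{(d)_{\underline{k}}}\sum_{i+j=k}\frac{(d)_{\underline{k}}}{(d)_{\underline{i}}(d)_{\underline{j}}}a_ib_j=\sum_{i+j=k}\frac{d^i a_i}{(d)_{\underline{i}}}\cdot\frac{d^j b_j}{(d)_{\underline{j}}}.
\]
This gives $A_{f\boxplus_d g}\equiv A_fA_g \pmod{z^{d+1}}$. Taking the formal logarithm turns the product into a sum, and applying $-\frac zd\frac{d}{dz}$ gives (iii) coefficientwise for $i\le d$.

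\textbf{Homogeneity.} Dilation multiplies $a_k$ by $r^k$, so $A_{D_rf}(z)=A_f(rz)$. Hence $\log A_{D_rf}(z)=(\log A_f)(rz)$, and the operator $z\frac{d}{dz}$ commutes with $z\mapsto rz$. So $\kappa_i^{(d)}(D_rf)=r^i\kappa_i^{(d)}(f)$, which is (ii).

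\textbf{Property (i).} Write $a_k=(-1)^ke_k(\lambda)$ and set $\ell(z)=\log A_f(z)=\sum_k \ell_k z^k$.
\begin{itemize}
\item The coefficient $\ell_i$ equals $\frac{d^i}{(d)_{\underline{i}}}a_i$ plus a polynomial in $a_1,\dots,a_{i-1}$. Hence $\kappa_i^{(d)}=-\frac{i}{d}\ell_i$ equals $-\frac{i\,d^{i-1}}{(d)_{\underline{i}}}a_i$ plus lower terms.
\item Newton's identities give $p_i=-i a_i+(\text{polynomial in }a_1,\dots,a_{i-1})$.
\item Conversely, each $a_j$ is a polynomial in $p_1,\dots,p_j$.
\end{itemize}
Substituting, $\kappa_i^{(d)}=\frac{d^{i-1}}{(d)_{\underline{i}}}p_i+F_i(p_1,\dots,p_{i-1})$, as required. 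The only care needed is that Newton's identity is applied with the correct sign and with the factor $i$ in the leading term.

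\textbf{Uniqueness (the main obstacle).} Let $\tilde\kappa$ be another family satisfying (i)--(iii); I argue by induction on $i$. Suppose $\tilde\kappa_j=\kappa_j^{(d)}$ for all $j<i$. By (i), the difference $\delta=\tilde\kappa_i-\kappa_i^{(d)}$ is a polynomial $G(p_1,\dots,p_{i-1})$. Since the $p_j$ with $j<i$ are polynomials in the lower cumulants, $\delta$ is a polynomial $H(\kappa_1,\dots,\kappa_{i-1})$ in those cumulants, and the map $f\mapsto(\kappa_1^{(d)}(f),\dots,\kappa_d^{(d)}(f))$ is a bijection onto $\mathbb{R}^d$.
\begin{itemize}
\item Additivity of both families on $f\boxplus_d g$, combined with the additivity of $\kappa_j^{(d)}$, gives $H(x+y)=H(x)+H(y)$ on all of $\mathbb{R}^{i-1}$. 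So $H$ is linear.
\item Homogeneity gives $H(r x_1,\dots,r^{i-1}x_{i-1})=r^iH(x)$. A linear form $\sum c_jx_j$ transforms with the factors $r^j$ for $j<i$, never $r^i$, so all $c_j=0$.
\end{itemize}
Hence $\delta=0$ and $\tilde\kappa_i=\kappa_i^{(d)}$. The delicate point is the surjectivity of the cumulant map, which is needed so that the functional equations hold on all of $\mathbb{R}^{i-1}$. It follows from the triangular, invertible relation between $(a_k)$ and $(\kappa_k)$ given by the definition.
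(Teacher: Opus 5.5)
Your proof is correct. The existence half is essentially the paper's argument, specialized directly to $t=d$. The paper proves the $t$-deformed Theorem~\ref{t-cumulants} for $t\notin\mathbb{Z}_{\ge0}$ from the same ingredients:
\begin{itemize}
\item multiplicativity $E^t[A\boxplus^t B]=E^t[A]\,E^t[B]$;
\item the intertwining $E^t[D_r[A]]=D_r[E^t[A]]$;
\item triangular coefficient recursions, which it obtains by comparing coefficients in $M[A]\,A=-z\frac{d}{dz}A$ and $tC^t[A]\,E^t[A]=-z\frac{d}{dz}E^t[A]$ rather than by expanding the logarithm.
\end{itemize}
It then recovers Theorem~\ref{Arizmensi-Perales} by letting $t\to d$ on $U_d$ and reducing modulo $z^{d+1}$.

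Your direct congruence $A_{f\boxplus_d g}\equiv A_fA_g \pmod{z^{d+1}}$ has a small advantage: it makes explicit that additivity holds only for $i\le d$. The paper's limiting passage leaves this implicit. For $i>d$ the limiting cumulants are not additive under $\boxplus^d$: already for $d=1$, $A=1+a_1z$, $B=1+b_1z$, the $z^2$-coefficients of $C^1[A\boxplus^1 B]$ and $C^1[A]+C^1[B]$ differ by $2a_1b_1$.

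The genuine departure is uniqueness. The paper gives no argument of its own and defers to the standard one of Hasebe--Saigo. In its usual form, that argument uses homogeneity to reduce $F_i$ to its weight-$i$ part, whose monomials are products of at least two power sums and so contribute only at order $N^2$. It then reads off $\tilde\kappa_i(f)$ as the coefficient of $N$ in the polynomial $N\mapsto\frac{d^{i-1}}{(d)_{\underline{i}}}\,p_i\bigl(f^{\boxplus_d N}\bigr)$.

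Your route instead transports Cauchy additivity and weighted homogeneity of the difference polynomial $H$ to all of $\mathbb{R}^{i-1}$, through surjectivity of the cumulant map. What each approach buys:
\begin{itemize}
\item Yours is fully self-contained and elementary. It uses two-variable additivity and surjectivity; the latter holds because the domain is all of $\mathbb{R}[x]_{\mathrm{monic},d}$. On a subclass such as real-rooted polynomials you would need an open-set or Zariski-density substitute.
\item The coefficient-of-$N$ argument needs only additivity along convolution powers and no surjectivity. It also produces an explicit formula for the cumulants.
\end{itemize}
Incidentally, your induction hypothesis is never used; the argument handles each $i$ on its own.
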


According to \cite[Theorem 5.4]{arizmendi2018cumulants-joctsa}, let $f_d \in \mathbb{R}[x]_{{\rm monic},d}$ be a real-rooted polynomial of degree $d$ whose empirical root distribution 
$$
\mathfrak{m}\llbracket f_d\rrbracket \coloneqq \frac{1}{d}\sum_{x: f_d(x)=0}\delta_x
$$ 
converges weakly to some compactly supported probability measure $\mu$ on $\mathbb{R}$ as $d\to \infty$. Then the finite free cumulants $\kappa_n^{(d)}(f_d)$ of $f_d$ converge to the free cumulants $\kappa_n(\mu)$ of $\mu$ as $d\to \infty$. Due to the result, if $f_d,g_d\in \mathbb{R}[x]_{{\rm monic},d}$ are real-rooted polynomials such that $\mathfrak{m}\llbracket f_d\rrbracket\xrightarrow{w} \mu$ and $\mathfrak{m}\llbracket g_d\rrbracket \xrightarrow{w}\nu$ for some compactly supported probability measures $\mu,\nu$ on $\mathbb{R}$, then 
$$
\mathfrak{m}\llbracket f_d \boxplus_d g_d\rrbracket \xrightarrow{w} \mu\boxplus \nu \qquad \text{as} \quad  d\to \infty,
$$
see \cite[Corollary 5.5]{arizmendi2018cumulants-joctsa}. More recently, Fujie \cite{fujie2026regularity} proved this convergence without the assumption that the limiting probability measures are compactly supported.

\subsection{$t$-deformed convolution}

In this paper, we extend the definition and basic properties of finite free convolution and finite free cumulants from polynomials to the ring of formal power series. This extension leads to a one-parameter family of convolutions, called the $t$-deformed convolution, together with the corresponding notion of $t$-deformed cumulants.

To motivate this construction, we embed polynomials into the ring of formal power series. Define a map $\iota_{d} \colon \mathbb{R}[x]_{d} \to \mathbb{R}\llbracket z \rrbracket$ by 
$$
\iota_{d}(f)(z) \coloneqq z^{d}f(z^{-1}), 
$$
where
\begin{align*}
\mathbb{R}\llbracket z \rrbracket \coloneqq \left\{\sum_{k=0}^\infty a_{k} z^{k} : a_{k} \in \mathbb{R} \right\}
\end{align*}
is the ring of formal power series. This embedding allows us to reinterpret finite free convolution in the setting of formal power series.

The convolution $\boxplus_{d}$ on $\mathbb{R}[x]_d$ induces a convolution on $\operatorname{Im}\iota_{d}$. For $A(z) = \sum_{k=0}^{d} a_{k} z^{k}$ and $B(z) = \sum_{k=0}^{d} b_{k} z^{k}$ in $\operatorname{Im}\iota_{d}$, we define
\begin{align}\label{d-convolution}
(A \boxplus^{d} B)(z) \coloneqq \sum_{k=0}^{d} \sum_{i+j=k} \frac{(d)_{\underline{k}}}{(d)_{\underline{i}} (d)_{\underline{j}}} a_{i} b_{j} z^{k}.
\end{align}
Then we obtain
\begin{align}\label{free-t-convolution}
\iota_d(f \boxplus_{d} g) = \iota_{d}(f) \boxplus^{d} \iota_{d}(g)
\end{align}
for all $f, g \in \mathbb{R}[x]_{d}$. 
However, although $\operatorname{Im}\iota_{d} \subset \operatorname{Im}\iota_{d+1}$, the operations $\boxplus^{d}$ are not compatible as $d$ varies. This lack of compatibility motivates the construction of a unified convolution on the whole ring $\mathbb{R}\llbracket z \rrbracket$.

\begin{definition}
Let $A(z) = \sum_{k=0}^{\infty} a_{k} z^{k}$ and $B(z) = \sum_{k=0}^{\infty} b_{k} z^{k}$ be formal power series in $\mathbb{R}\llbracket z \rrbracket$, and let $t \in \mathbb{R} \setminus \mathbb{Z}_{\geq 0}$. 
We define the \emph{$t$-deformed convolution} by
\begin{align*}
(A \boxplus^{t} B)(z) \coloneqq \sum_{k=0}^{\infty} \sum_{i+j=k} \frac{(t)_{\underline{k}}}{(t)_{\underline{i}} (t)_{\underline{j}}} a_{i} b_{j} z^{k}.
\end{align*}
For $t = d \in \mathbb{Z}_{\ge 1}$, the $d$-deformed convolution is defined by \eqref{d-convolution}.
\end{definition}

The parameter $t$ provides a continuous deformation of finite free convolution. In particular, for $A, B \in \operatorname{Im}\iota_{d}$, we have
\[
\lim_{t \to d} (A \boxplus^{t} B)(z) = (A \boxplus^{d} B)(z).
\]
Moreover, $\boxplus^t$ is a bilinear, commutative, and associative operation for all $t \in \mathbb{R}^\times$.

We next describe the main structural consequences of this construction. In Section \ref{sec2}, we introduce \textit{$t$-deformed cumulants} $\{\kappa_n^{t}(A)\}_{n\in \mathbb{Z}_{\ge1}}$, which linearize the convolution:
$$
\kappa_n^t \bigl(A \boxplus^t B\bigr) = \kappa_n^t(A) + \kappa_n^t(B), \qquad n \in \mathbb{Z}_{\ge1}.
$$
Thus, the $t$-deformed cumulants play the same role as classical, free, and finite free cumulants. We also establish analogues of the law of large numbers and the central limit theorem, showing that $\boxplus^t$ admits a probabilistic interpretation parallel to finite free probability; see Theorem \ref{limit_thms}.

In Section \ref{sec:3}, we show that the case $t = -1$ connects the theory to classical probability. If $X$ and $Y$ are independent random variables, then
\[
M_{X+Y}(z) = \bigl(M_X \boxplus^{-1} M_Y \bigr)(z),
\]
where $M_X$ denotes the ordinary generating function of moments of $X$. This identity shows that $\boxplus^{-1}$ corresponds to classical convolution at the level of moments. We further relate $t$-deformed cumulants to classical cumulants via a differential identity; see Theorem \ref{Classicalresult}.

Taken together, these results show that $\boxplus^t$ provides a unified framework interpolating between different notions of independence. More precisely, it recovers classical convolution at $t = -1$, finite free convolution at $t = d \in \mathbb{Z}_{\ge 1}$, and free convolution in the limit $t =d\to \infty$. Therefore, the $t$-deformed convolution establishes a continuous bridge between classical, finite free, and free probability theories.

\subsection{Infinitesimal generators for $\boxplus^t$-semigroups}

In Section \ref{sec:4}, we study the infinitesimal generators of $\boxplus^t$-continuous semigroups of formal power series in order to understand the dynamical structure of the $\boxplus^t$-time evolution. The infinitesimal generator plays a central role in describing how a process evolves over an infinitesimal time scale, and thus provides a fundamental tool for analyzing its global behavior.

In classical probability theory, infinitesimal generators are essential for understanding stochastic processes. In particular, they characterize the time evolution of distributions via partial differential equations such as the Kolmogorov forward equation. For example, the infinitesimal generator of one-dimensional Brownian motion $\{B(s)\}_{s \ge 0}$ is the Laplacian $\frac{1}{2}\Delta$, and the corresponding density function $p(s,x)$ satisfies the heat equation 
$$
\partial_s p = \frac{1}{2}\Delta p. 
$$

This connection motivates the study of analogous structures in the setting of formal power series and $t$-deformed convolution $\boxplus^t$. Let $\mathfrak{Q} = \{Q_s\}_{s \ge 0}$ be a $\boxplus^t$-continuous semigroup of formal power series. Then the associated operators $\{T_s = T_{Q_s}\}_{s \ge 0}$ form a semigroup under composition, where
\begin{align*}
T_s A := A \boxplus^t Q_s, \qquad s \ge 0,
\end{align*}
for a formal power series $A$. For the semigroup $\{T_s\}_{s\ge0}$, we define the {\it $(t,r)$-deformed infinitesimal generator} $\mathcal{L}_r^{(t)}[\mathfrak{Q}]$ by
$$
\mathcal{L}_r^{(t)}[\mathfrak{Q}] A \coloneqq  \lim_{s\to 0^+} \frac{T_s A -A}{s}, \qquad A\in \mathcal{D}(\mathcal{L}_r^{(t)}[\mathfrak{Q}] )
$$
where $ \mathcal{D}(\mathcal{L}_r^{(t)}[\mathfrak{Q}] )$ denotes the set of power series $A$ for which the limit exists; see Definition \ref{def:generators}. Then the infinitesimal generator $\mathcal{L}_r^{(t)}[\mathfrak{Q}]$ is conjugate to the multiplication operator of 
\begin{align*}
\eta_\mathfrak{Q}(z) \coloneqq \lim_{s\to 0^+} \frac{\Phi_t(Q_s)(z)-1}{s} \quad (|z|<r),
\end{align*}
via the transform $\Phi_t$, where
\begin{align*}
\Phi_t\left( \sum_{k=0}^\infty a_k z^k \right)
\coloneqq \sum_{k=0}^\infty \frac{a_k}{(t)_{\underline{k}}} z^k.
\end{align*}
This representation reveals a simple and tractable structure underlying the $\boxplus^t$-dynamics; see Theorem \ref{thm:generators}.

Next, we consider the case $t=d$, which is closely related to finite free probability. Let $\mathfrak{q} = \{q_s\}_{s \ge 0} \subset \mathbb{R}[x]_d$ be a $\boxplus_d$-semigroup, that is, $q_0(x)=x^d$ and $q_{s_1}\boxplus_d q_{s_2}=q_{s_1+s_2}$ for any $s_1,s_2\ge0$. For each $s \ge 0$, define the convolution operator $T_{q_s}$ by
$$
T_{q_s} p :=p \boxplus_d q_s, \qquad p \in \mathbb{R}[x]_d
$$ 
and the {\it finite free infinitesimal generator} by
$$
\mathcal{L}[\mathfrak{q}] f \coloneqq \lim_{s\to 0^+} \frac{T_{q_s} f -f}{s}, \qquad  f\in \mathbb{R}[x]_d
$$
whenever the limit exists. We now relate this operator to the $d$-deformed infinitesimal generator. More precisely, we have
\begin{align*}
\iota_d\bigl(\mathcal{L}[\mathfrak{q}]f\bigr) = \mathcal{L}^{(d)}\bigl[\iota_d(\mathfrak{q}) \bigr]\bigl( \iota_d(f)\bigr), \qquad f\in \mathbb{R}[x]_d,
\end{align*}
where $\mathcal{L}^{(d)}$ denotes the $d$-deformed infinitesimal generator (see Section \ref{sec:4.3}) and $\iota_d(\mathfrak{q}) = (\iota_d(q_s))_{s\ge0}$. This connection allows us to interpret finite free infinitesimal generators within the $(t,r)$-deformed framework.

In the final part of Section~\ref{sec:4}, we consider the special case $t = -1$, where the theory connects directly to classical probability. Let $\{X(s)\}_{s \ge 0}$ be a L\'{e}vy process such that the moment generating function is finite in a neighborhood of the origin, and let $\mathfrak{M} = \{M_{X(s)}\}_{s \ge 0}$ denote the corresponding ordinary generating functions of moments. Then the $(-1,r)$-deformed infinitesimal generator $\mathcal{L}_r^{(-1)}[\mathfrak{M}]$ is conjugate to the multiplication operator by the L\'{e}vy--Khintchine representation
\begin{align*}
\eta_{\gamma,a,\nu}(z)
 = -\gamma z + \frac{a}{2}z^2 + \int_{\mathbb{R}}\left(e^{-zx}-1+zx\mathbf{1}_{|x|\le 1}\right)\, \nu(dx),
\end{align*}
for $\gamma \in \mathbb{R}$, $a \ge 0$, and a L\'{e}vy measure $\nu$, via the transform $\Phi_{-1}$; see Theorem \ref{eq:semigroup-Levy}. 

Since the infinitesimal generator admits the L\'{e}vy--Khintchine representation, the associated evolution equation can be interpreted as an analogue of the Kolmogorov forward (or Fokker--Planck) equation for L\'{e}vy processes. In general, the generator contains a nonlocal integral term, and hence the corresponding evolution equation takes the form of an integro-differential equation.
In the formal power series setting, the $\boxplus^{-1}$-evolution $F(s,z) = \bigl(A \boxplus^{-1} M_{X(s)}\bigr)(z)$ satisfies the {\it $(-1,r)$-deformed Kolmogorov forward equation}
\[
\frac{\partial}{\partial s} \Phi_{-1}\bigl(F(s,\cdot) \bigr)(z) = \eta_{\gamma,a,\nu}(z) \Phi_{-1}\bigl(F(s,\cdot)\bigr)(z).
\]
In the special case where $\{X(s)\}_{s\ge0}$ is a one-dimensional Brownian motion, the generator reduces to the multiplication operator $z^2/2$. Consequently, the evolution equation simplifies to
\[
\frac{\partial}{\partial s} \Phi_{-1}\bigl(F(s,\cdot)\bigr)(z)
=
\frac{z^2}{2} \, \Phi_{-1}\bigl(F(s,\cdot)\bigr)(z),
\]
with $F(0,z)=A(z)$. Thus, the evolution is governed by the {\it $(-1,r)$-deformed heat equation}, corresponding to the classical heat equation under the transform $\Phi_{-1}$.

These results show that the $\boxplus^t$-framework admits evolution equations analogous to the evolution equations associated with classical stochastic processes, with the infinitesimal generator governing the dynamics. Moreover, the $t$-deformation provides a unified perspective: it recovers classical convolution at $t = -1$, finite free convolution at integer values of $t$, and connects to free probability in the limit $t \to \infty$. In this way, the $(t,r)$-deformed infinitesimal generator highlights a structural link between classical, finite free, and free probability theories.

\section{$t$-deformed convolution and cumulants}\label{sec2}

\subsection{$t$-deformed cumulants}\label{sec2.1}
Let $U \coloneqq 1 + z \mathbb{R}\llbracket z \rrbracket$ denote the group of principal units in $\mathbb{R}\llbracket z \rrbracket$. 
We also define $U_d \coloneqq \iota_d\bigl(\mathbb{R}[x]_{\mathrm{monic},d}\bigr)$. 
Then $U_d \subset U$ for all $d \ge 1$.

For $r \in \mathbb{R}$, we define the \emph{dilation} $D_{r}[A](z)$ of $A\in U$ by 
$$
D_{r}[A](z) \coloneqq A(rz).
$$ 
Then $D_{r}[\iota_{d}(f)](z) = \iota_{d}(D_{r}f)(z)$ for $f \in \mathbb{R}[x]_{{\rm monic},d}$. 

We also define ``power sums" $\{p_{k}(A)\}_{k \in \mathbb{Z}_{\ge1}}$ for $A \in U$. 
However, it does not make sense to consider the roots of the formal power series $A(z)$. 
Instead of considering the roots, we define $\{p_{k}(A)\}_{k\in \mathbb{Z}_{\ge1}}$ using the relation between elementary symmetric functions and the power sum symmetric functions (Newton's identities), as follows. 
Let $\{x_{i}\}_{i \in \mathbb{Z}_{\ge1}}$ be countably many indeterminates. 
Define the $k$-th elementary symmetric function $e_{k}$ and the $k$-th power sum symmetric function $p_{k}$ by 
\begin{align*}
e_{k} \coloneqq \sum_{i_{1} < \dots < i_{k}}x_{i_{1}} \cdots x_{i_{k}}, \qquad
p_{k} \coloneqq \sum_{i=1}^{\infty}x_{i}^{k}. 
\end{align*}
Then the formal computation yields 
\begin{align*}
-z\dfrac{d}{dz}\log\left(1 + \sum_{k=1}^{\infty}(-1)^{k}e_{k}z^{k}\right) 
&= -z\dfrac{d}{dz}\log\left(\prod_{i=1}^{\infty}(1-x_{i}z)\right)\\
&= \sum_{i=1}^{\infty}\dfrac{x_{i}z}{1-x_{i}z}= \sum_{k=1}^{\infty}p_{k}z^{k}. 
\end{align*}
Therefore, for $A \in U$, we define $\{p_i(A)\}_{i\in \mathbb{Z}_{\ge1}}$ by
\begin{align*}
M[A](z) \coloneqq \sum_{k=1}^{\infty}p_{k}(A)z^{k} \coloneqq -z\dfrac{d}{dz} \log\bigl(A(z)\bigr).
\end{align*}

\begin{definition}
For $t\in \mathbb{R}\setminus \mathbb{Z}_{\ge0}$, we define the \emph{$t$-deformed cumulants} $\{\kappa_{i}^{t}(A)\}_{i \in \mathbb{Z}_{\ge1}}$ of $A \in U$ by
$$
C^t[A](z) \coloneqq \sum_{i=1}^\infty \kappa_i^t(A)z^i \coloneqq -\frac{z}{t} \frac{d}{dz}\log \left(1+ \sum_{k=1}^\infty \frac{t^{k}}{(t)_{\underline{k}}} a_{k} z^{k} \right).
$$
If $A\in U_d$ for some $d\in \mathbb{Z}_{\ge1}$, we define the \emph{$d$-deformed cumulants} $\{\kappa_{i}^{d}(A)\}_{i \in \mathbb{Z}_{\ge1}}$ by
$$
C^d[A](z) \coloneqq \lim_{t\to d} C^t[A](z)\coloneqq \sum_{i=1}^\infty \kappa_i^d(A)z^i  = -\frac{z}{d} \frac{d}{dz}\log\left(1+ \sum_{k=1}^d \frac{d^{k}}{(d)_{\underline{k}}} a_{k} z^{k}\right).
$$
We call $C^t[A](z)$ the \emph{$t$-deformed cumulant transform} of $A$. 
\end{definition}

In what follows, we restrict our attention to the case $t \in \mathbb{R} \setminus \mathbb{Z}_{\ge 0}$. 
Both the definitions and the results (Lemmas~\ref{lem:injective} and \ref{homogeneity}, and Theorem~\ref{t-cumulants}) extend to the case $t = d$ by taking the limit $t \to d$, 
with the corresponding objects interpreted on $U_d$.

For $t\in \mathbb{R}\setminus\mathbb{Z}_{\ge0}$ and $A\in U $, we define
\begin{align*}
E^{t}[A](z) \coloneqq 1 + \sum_{k=1}^{\infty}\frac{t^{k}}{(t)_{\underline{k}}}a_{k}z^{k}. 
\end{align*}
Then we have 
\begin{align}\label{C^t-E^t[A]}
C^{t}[A](z)= -\frac{z}{t} \frac{d}{dz}\log \bigl(E^{t}[A](z)\bigr)
\end{align}
and 
\begin{align}\label{E^t}
tC^{t}[A](z) = -z\frac{d}{dz}\log \bigl(E^{t}[A](z)\bigr) = M[E^{t}[A]](z). 
\end{align}

The $t$-deformed cumulant transform uniquely determines the formal power series.
\begin{lemma}\label{lem:injective}
The transform $C^{t} : U \to \mathbb{R}\llbracket z \rrbracket$ is injective, that is, if $C^{t}[A]=C^{t}[B]$, then $A=B$.
\end{lemma}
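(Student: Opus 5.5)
The plan is to factor $C^t$ as a composition of maps that are each injective on $U$:
\[
A \longmapsto E^t[A] \longmapsto \log\bigl(E^t[A]\bigr) \longmapsto -\frac{z}{t}\frac{d}{dz}\log\bigl(E^t[A]\bigr) = C^t[A],
\]
and to check injectivity of each step separately.

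\textbf{Step 1: $E^t$ is injective on $U$.} For $A\in U$, $E^t[A]$ is again a principal unit. Its $k$-th coefficient is $\frac{t^k}{(t)_{\underline{k}}}a_k$. Since $t\in\mathbb{R}\setminus\mathbb{Z}_{\ge0}$, we have $t\neq 0$ and $(t)_{\underline{k}}=t(t-1)\cdots(t-k+1)\neq 0$ for every $k$. Hence the scalar $t^k/(t)_{\underline{k}}$ is nonzero, and $a_k$ is recovered from $E^t[A]$.

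\textbf{Step 2: $\log$ is injective on $U$.} The formal logarithm $\log\colon U\to z\mathbb{R}\llbracket z\rrbracket$ is a bijection, with inverse the formal exponential. Concretely, $\exp(\log(1+w))=1+w$ for $w\in z\mathbb{R}\llbracket z\rrbracket$, which is a standard identity of formal power series.

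\textbf{Step 3: the operator $-\frac{z}{t}\frac{d}{dz}$ is injective on $z\mathbb{R}\llbracket z\rrbracket$.} This operator sends $\sum_{k\ge1} c_k z^k$ to $\sum_{k\ge1} -\frac{k}{t}c_kz^k$. Because $k\ge1$ and $t\neq0$, each coefficient $c_k$ is determined. Since $\log(E^t[A])$ has no constant term, nothing is lost.

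\textbf{Conclusion.} If $C^t[A]=C^t[B]$, then Step 3 gives $\log E^t[A]=\log E^t[B]$. Step 2 then gives $E^t[A]=E^t[B]$, and Step 1 gives $a_k=b_k$ for all $k$, so $A=B$.

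Alternatively, one can avoid the exponential. Write $E=E^t[A]=1+\sum e_kz^k$ and use the relation $-zE'=tC^t[A]\,E$, which follows from \eqref{C^t-E^t[A]}. This yields the recursion
\[
-k e_k=t\sum_{i=1}^{k}\kappa_i^t(A)\,e_{k-i},\qquad e_0=1.
\]
It determines each $e_k$ inductively from the cumulants, which again shows injectivity.

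There is no real obstacle here. The only point needing care is the nonvanishing of $(t)_{\underline{k}}$ and of $t$, and both are guaranteed by the hypothesis $t\notin\mathbb{Z}_{\ge0}$.
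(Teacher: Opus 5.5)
Your proof is correct and follows essentially the same route as the paper: $C^t[A]=C^t[B]$ forces the derivatives of $\log E^t[A]$ and $\log E^t[B]$ to agree, and vanishing constant terms then give equality of the logarithms, hence $E^t[A]=E^t[B]$ and $A=B$. Your version is more explicit than the paper's about the facts it uses implicitly: $t\neq 0$, $k\ge 1$, and $(t)_{\underline{k}}\neq 0$, which make each step invertible.
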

\begin{proof}
Suppose that $C^{t}[A]=C^{t}[B]$ for $A,B\in U$. By \eqref{C^t-E^t[A]}, we obtain
$$
\frac{d}{dz} \bigl\{\log \bigl(E^{t}[A](z)\bigr) -\log \bigl(E^{t}[B](z)\bigr) \bigr\}=0.
$$
Since $\log \bigl(E^t[A](z) \bigr)|_{z=0}=\log\bigl(E^t[B](z)\bigr)|_{z=0}=0$, it follows that
$$
\log \bigl(E^{t}[A](z)\bigr) = \log \bigl(E^{t}[B](z)\bigr),
$$
and hence $E^{t}[A](z)=E^{t}[B](z)$. Therefore, $A=B$.
\end{proof}

\begin{lemma}\label{homogeneity}
Let $A \in U$. For any $r \in \mathbb{R}$ and $k \in \mathbb{Z}_{\ge1}$, we have $p_{k}\bigl(D_{r}[A]\bigr) = r^{k}p_{k}(A)$. 
\end{lemma}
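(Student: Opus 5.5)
We prove this directly from the definition $M[A](z) = -z\frac{d}{dz}\log\bigl(A(z)\bigr)$ by checking how the operator $-z\frac{d}{dz}\log$ behaves under the substitution $z \mapsto rz$. Set $B \coloneqq D_r[A]$, so $B(z) = A(rz)$. Since $A \in U$, we also have $B \in U$, so $\log B$ is a well-defined formal power series. The aim is the identity
\begin{align*}
M[D_r[A]](z) = M[A](rz),
\end{align*}
after which comparing coefficients of $z^k$ gives $p_k(D_r[A]) = r^k p_k(A)$.

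\textbf{Step 1: logarithm commutes with substitution.} We first note that $\log\bigl(A(rz)\bigr) = (\log A)(rz)$ as formal power series. Write $A(z) = 1 + u(z)$ with $u \in z\mathbb{R}\llbracket z \rrbracket$. Then
\[
\log\bigl(A(rz)\bigr) = \sum_{k=1}^{\infty} \frac{(-1)^{k-1}}{k} u(rz)^k .
\]
The map $F(z) \mapsto F(rz)$ is a ring endomorphism of $\mathbb{R}\llbracket z \rrbracket$. It is also continuous in the $z$-adic topology, since it preserves the order of a series. Hence it commutes with this convergent sum, which gives the claim.

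\textbf{Step 2: chain rule.} Next we use $\frac{d}{dz}\bigl(F(rz)\bigr) = rF'(rz)$, which one checks coefficientwise on $F = \sum_n c_n z^n$. Applying this to $F = \log A$ gives
\[
-z\frac{d}{dz}\log\bigl(A(rz)\bigr) = -z\,r\,(\log A)'(rz) = -(rz)(\log A)'(rz) = M[A](rz).
\]

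\textbf{Step 3: compare coefficients.} Expanding $M[A](rz) = \sum_k p_k(A) r^k z^k$ and matching coefficients completes the proof. The case $r = 0$ needs no separate treatment: then $B = 1$, $\log B = 0$, and all $p_k(B) = 0 = 0^k p_k(A)$ for $k \ge 1$.

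There is no real obstacle here. The only point requiring care is Step 1, namely justifying that substitution commutes with the formal logarithm, and that follows from continuity of the substitution endomorphism. An alternative, avoiding logarithms entirely, is to use the recursive Newton-type relation $-zA'(z) = A(z)M[A](z)$ and induct on $k$ using the homogeneity of the coefficients $a_k \mapsto r^k a_k$.
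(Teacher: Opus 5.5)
Your proposal is correct and follows essentially the same route as the paper: both prove $M[D_r[A]](z) = M[A](rz)$ by the chain rule and then compare coefficients of $z^k$. The paper writes the middle step directly as $-rz\,A'(rz)/A(rz)$, so your Step 1 (commuting the formal logarithm with the substitution) is extra justification that the paper leaves implicit.
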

\begin{proof}
A direct computation shows that
\begin{align*}
M[D_{r}[A]](z) 
&= -z\dfrac{d}{dz}\log\bigl(D_{r}[A](z)\bigr)\\
&= -z\dfrac{d}{dz}\log\bigl(A(rz)\bigr)\\
&= -rz \dfrac{\bigl(\frac{d}{dz}A\bigr)(rz)}{A(rz)}= M[A](rz).
\end{align*}
Consequently, we get $p_{k}\bigl(D_{r}[A]\bigr) = r^{k}p_{k}(A)$ for any $k\in \mathbb{Z}_{\ge1}$. 
\end{proof}

The $t$-deformed cumulants satisfy the axiomatized conditions for cumulants as follows. 

\begin{theorem}\label{t-cumulants}
The $t$-deformed cumulants $\{\kappa_{i}^{t}(A)\}_{i \in \mathbb{Z}_{\ge1}}$ are unique maps satisfying the following conditions:
\begin{enumerate}[\rm (i)]
\item $\kappa_{i}^{t}(A)$ is a polynomial in $p_{1}(A), \dots, p_{i}(A)$ with leading term $\frac{t^{i-1}}{(t)_{i}} p_{i}(A)$. 
That is, there exists a polynomial $F_{i}$ in $i-1$ variables such that 
$$
\kappa_{i}^t(A) = \frac{t^{i-1}}{(t)_{\underline{i}}}p_{i}(A) + F_{i}(p_{1}(A), \dots, p_{i-1}(A)).
$$ 
\item Homogeneity: $\kappa_{i}^{t}(D_{r}[A]) = r^{i}\kappa_{i}^{t}(A)$ for any $r \in \mathbb{R}$ and $i \in \mathbb{Z}_{\ge1}$.
\item Additivity: $\kappa_{i}^{t}(A \boxplus^{t} B) = \kappa_{i}^{t}(A) + \kappa_{i}^{t}(B)$. 
In other words, 
$$
C^t[A \boxplus^{t} B](z) = C^{t}[A](z) + C^{t}[B](z).
$$ 
\end{enumerate}
\end{theorem}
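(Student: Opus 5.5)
Each of (i)--(iii) will be checked directly from the definition of $C^t$; uniqueness then follows because the three conditions leave no freedom.

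\textbf{Homogeneity (ii).} Since $D_r[A]$ has coefficients $r^k a_k$, we get $E^t[D_r[A]](z) = E^t[A](rz)$. The computation in the proof of Lemma~\ref{homogeneity}, applied to $E^t[A]$ instead of $A$, gives $C^t[D_r[A]](z) = C^t[A](rz)$, and (ii) follows.

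\textbf{Additivity (iii).} The key observation is that $E^t$ turns $\boxplus^t$ into ordinary multiplication. The $k$-th coefficient of $A\boxplus^t B$ is $\sum_{i+j=k}\frac{(t)_{\underline k}}{(t)_{\underline i}(t)_{\underline j}}a_ib_j$. Multiplying by $t^k/(t)_{\underline k}$ gives
\[
\sum_{i+j=k}\frac{t^i a_i}{(t)_{\underline i}}\frac{t^j b_j}{(t)_{\underline j}},
\]
so $E^t[A\boxplus^t B]=E^t[A]\,E^t[B]$. Here $t\notin\mathbb{Z}_{\ge0}$ guarantees that no falling factorial vanishes. Taking $-\frac{z}{t}\frac{d}{dz}\log$ of both sides and using \eqref{C^t-E^t[A]} together with $\log(FG)=\log F+\log G$ for principal units yields (iii).

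\textbf{Polynomial structure (i).} This is the step needing actual work. I would compare $M[A]$ and $M[E^t[A]]=tC^t[A]$ through the coefficients. Newton's identities, read off from $-zA'=M[A]\,A$, give $k a_k = -\sum_{j=1}^{k} p_j(A) a_{k-j}$. By induction, $a_k$ is a polynomial in $p_1(A),\dots,p_k(A)$ of the form $a_k=-\frac{1}{k}p_k(A)+(\text{polynomial in }p_1,\dots,p_{k-1})$. The coefficients of $E^t[A]$ are $\frac{t^k}{(t)_{\underline k}}a_k$. Applying Newton's identities again, now to $E^t[A]$, the power sums $p_i(E^t[A])$ are polynomials in these coefficients. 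Their top term is $-i\cdot\frac{t^i}{(t)_{\underline i}}a_i$, plus products of lower-index coefficients. Substituting the expression for $a_i$ gives the top term $\frac{t^i}{(t)_{\underline i}}p_i(A)$. Dividing by $t$, via \eqref{E^t}, gives the leading term $\frac{t^{i-1}}{(t)_{\underline i}}p_i(A)$. All remaining terms involve only $p_1,\dots,p_{i-1}$. The main care is bookkeeping to confirm that every term besides the top one uses only indices below $i$, which is clear from the triangular form of Newton's identities.

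\textbf{Uniqueness.} Suppose $\{\tilde\kappa_i\}$ is another family satisfying (i)--(iii). I would follow the Lehner / Arizmendi--Perales argument, by induction on $i$.
\begin{enumerate}[\rm (a)]
\item By (i) applied to both families, and since $A$ is determined by its power sums $p_1,\dots,p_i$ up to order $i$, the difference $\tilde\kappa_i-\kappa_i$ is a polynomial $G(p_1,\dots,p_{i-1})$, after using the induction hypothesis on the lower cumulants.
\item By (ii) and Lemma~\ref{homogeneity}, $G$ is homogeneous of degree $i$ when $p_j$ is given weight $j$.
\item By (iii), the difference is additive under $\boxplus^t$, and since $\boxplus^t$ is linearized by $\kappa$, it is additive in $(\kappa_1,\dots,\kappa_{i-1})$.
\end{enumerate}
Because $C^t$ is a bijection of $U$ onto $z\mathbb{R}\llbracket z\rrbracket$ (injectivity is Lemma~\ref{lem:injective}; surjectivity holds since $\exp$ is invertible on principal units), the lower cumulants can be prescribed freely. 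An additive polynomial must then be linear in $\kappa_1,\dots,\kappa_{i-1}$. Weighted homogeneity of degree $i$ then forces it to vanish, so $\tilde\kappa_i=\kappa_i$.

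The main obstacle is this uniqueness step: making rigorous that additivity forces linearity, and that linear terms of lower weight cannot reach degree $i$. Existence is mechanical.
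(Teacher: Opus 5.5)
Your proposal is correct and follows the paper's proof. Additivity comes from $E^t[A\boxplus^t B]=E^t[A]\,E^t[B]$, homogeneity from $E^t[D_r[A]]=D_r[E^t[A]]$ together with Lemma~\ref{homogeneity}, and (i) from the same triangular coefficient recursions obtained from $-zA'=M[A]\,A$ and $-z\frac{d}{dz}E^t[A]=tC^t[A]\,E^t[A]$. For uniqueness the paper only cites the standard Hasebe--Saigo argument, and your sketch is exactly that argument, correctly carried out: the difference is a polynomial in $\kappa_1,\dots,\kappa_{i-1}$, it is additive and hence linear because $C^t$ is surjective, and weighted homogeneity of degree $i$ then forces it to vanish (your induction hypothesis is in fact not needed there).
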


\begin{proof}
\begin{enumerate}[\rm (i)]
\item The definition $M[A](z) = -z\frac{d}{dz}\log(A(z))$ and \eqref{E^t} yield
$$
tC^{t}[A](z) = -z\frac{d}{dz}\log\bigl(E^{t}[A](z)\bigr).
$$ 
Consequently, we obtain
$$
M[A](z) \cdot A(z) = -z\frac{d}{dz}A(z) \quad \text{and} \quad tC^{t}[A](z) \cdot E^{t}[A](z) = -z\frac{d}{dz}E^{t}[A].
$$ 
Comparing the coefficients, for any $i \in \mathbb{Z}_{\ge1}$, we have 
\begin{align*}
-ia_{i} &= p_{i}(A) + p_{i-1}(A)a_{1} + \dots + p_{1}(A)a_{i-1} \\
t\kappa_{i}^{t}(A) &= -\frac{t^{i}}{(t)_{\underline{i}}}ia_{i} - t\kappa_{1}^{t}(A)\frac{t^{i-1}}{(t)_{\underline{i-1}}}a_{i-1} - \dots - t\kappa_{i-1}^{t}\frac{t^{1}}{(t)_{\underline{1}}}a_{1}. 
\end{align*}
One can show that $-ia_{i}$ and $t\kappa_{i}^{t}(A)$ are polynomials in $p_{1}(A), \dots, p_{i}(A)$ with leading terms $p_{i}(A)$ and $\frac{t^{i}}{(t)_{\underline{i}}}p_{i}(A)$ by induction on $i$. 

\item Since $E^{t}[D_{r}[A]](z) = D_{r}[E^{t}[A]](z)$ and 
\begin{align*}
tC^{t}[A](z) = -z\frac{d}{dz}\log \bigl(E^{t}[A](z)\bigr) = M[E^{t}[A]](z),
\end{align*}
one can verify that
\begin{align*}
tC^{t}[D_{r}[A]](z) 
&= M[E^{t}[D_{r}[A]]](z)\\
&= M[D_{r}[E^{t}[A]]] \\
&= M[E^{t}[A]](rz) = tC^{t}[A](rz) 
\end{align*}
and hence $\kappa_{i}^{t}(D_{r}[A]) = r^{i}\kappa_{i}^{t}(A)$ for any $i \in \mathbb{Z}_{\ge1}$. 

\item A direct computation shows that
\begin{align*}
C^t[A\boxplus^t B](z) 
&= -\frac{z}{t} \frac{d}{dz}\log\left(\sum_{k=0}^\infty \frac{t^k}{(t)_{\underline{k}}} \sum_{i+j=k} \frac{(t)_{\underline{k}}}{(t)_{\underline{i}}(t)_{\underline{j}}} a_{i}b_{j}z^k \right)\\
&=-\frac{z}{t} \frac{d}{dz}\log\left( \sum_{k=0}^\infty  \sum_{i+j=k}  \frac{t^i}{(t)_{\underline{i}}} a_{i} \cdot \frac{t^j}{(t)_{\underline{j}}} b_{j} z^k\right)\\
&=-\frac{z}{t} \frac{d}{dz}\log \left[ \left( \sum_{i=0}^\infty \frac{t^i}{(t)_{\underline{i}}} a_{i} z^i\right)\left( \sum_{j=0}^\infty \frac{t^j}{(t)_{\underline{j}}} b_{j} z^j\right)\right]\\
&=-\frac{z}{t} \frac{d}{dz}\log \bigl(E^{t}[A](z)\bigr) -\frac{z}{t} \frac{d}{dz}\log \bigl(E^{t}[B](z)\bigr)\\
&=C^t[A](z) +C^t[B](z).
\end{align*}
Therefore $\kappa_{i}^{t}(A\boxplus^t B)=\kappa_{i}^{t}(A)+\kappa_{i}^{t}(B)$ by comparing these coefficients.
\end{enumerate}

The uniqueness follows from the axioms and the homogeneity of $p_{i}$ (Lemma \ref{homogeneity}(1)) by the standard argument used in the proof of \cite[Theorem 3.1]{hasebe2011monotone-adlhppes}. 
\end{proof}

\begin{remark}\label{rem:d-deformed result}
\begin{enumerate}[\rm (1)]

\item For any $f\in \mathbb{R}[x]_{{\rm monic},d}$, we get
$$
C^d[\iota_d(f)](z) \equiv \sum_{i=1}^d \kappa_i^{(d)}(f) z^i \qquad \text{(mod $z^{d+1}$)}.
$$
Therefore we can obtain Theorem \ref{Arizmensi-Perales} from Theorem \ref{t-cumulants}.
Thus, the $t$-deformed cumulants provide a unified framework for the finite free cumulants. 

\item Arizmendi and Perales presented explicit formulas of relations between cumulants and moments. 
One can show that the same formulas hold for $t$-deformed cumulants by formally similar computation. 
See \cite[Proposition 3.4, Lemma 4.1, and Theorem 4.2]{arizmendi2018cumulants-joctsa} for the description. 
\end{enumerate}
\end{remark}

\subsection{Examples}\label{sec2.2}

In this subsection, we present several important examples of formal power series from a probabilistic perspective.
\begin{example}[Binomial series]
For $t\in \mathbb{R}^\times$ and $\lambda\in \mathbb{R}$, we define
\[
B_\lambda^{(t)}(z) \coloneqq (1-\lambda z)^t = 1+\sum_{k=1}^\infty (-1)^k \frac{(t)_{\underline{k}}}{k!} \lambda^k z^k.
\]
We call $B_\lambda^{(t)}$ the \textit{$t$-deformed binomial series}. For $t\in \mathbb{R}\setminus \mathbb{Z}_{\ge0}$, we obtain
\begin{align*}
C^t\bigl[B_\lambda^{(t)}\bigr](z) 
&= -\frac{z}{t} \frac{d}{dz}\log \left( 1+ \sum_{k=1}^\infty  \frac{t^k}{(t)_{\underline{k}}} \cdot (-1)^k \frac{(t)_{\underline{k}}}{k!} \lambda^k z^k\right)\\
&= -\frac{z}{t} \frac{d}{dz}\log \left( 1+ \sum_{k=1}^\infty  \frac{(-t\lambda z)^k}{k!} \right)\\
&=-\frac{z}{t} \frac{d}{dz}\log \bigl(\exp (-t\lambda z)\bigr)\\
&=\lambda z.
\end{align*}
 For $d\in \mathbb{Z}_{\ge 1}$, we have
 $$
 C^d\bigl[B_\lambda^{(d)}\bigr](z) = \lim_{t\to d}C^t\bigl[B_\lambda^{(t)}\bigr](z) =\lambda z.
 $$
Consequently, we obtain the following results:
\begin{itemize}
\item $\kappa_1^t\bigl(B_\lambda^{(t)}\bigr)=\lambda$ and $\kappa_n^t\bigl(B_\lambda^{(t)}\bigr)=0$ for all $n\ge 2$.
\item For all $\lambda_1,\lambda_2 \in \mathbb{R}$, we have
\begin{align}\label{eq:semigrp_binomial}
B_{\lambda_1}^{(t)}\boxplus^t B_{\lambda_2}^{(t)} = B_{\lambda_1+\lambda_2}^{(t)}
\end{align}
\item For $d\in \mathbb{Z}_{\ge 1}$, we get $B_\lambda^{(d)} =\iota_d(p) \in U_d$, where $p(x)=(x-\lambda)^d$.
\end{itemize}
\end{example}

\begin{example}[Hermite series]\label{ex:Hermite}
For $t\in\mathbb{R}^\times$, we define
\[
H^{(t)}(z) \coloneqq 1+\sum_{k=1}^\infty (-1)^k \frac{(t)_{\underline{2k}}}{t^k (2k)!!} z^{2k}.
\]
We call $H^{(t)}$ the \textit{$t$-deformed Hermite series}. 
For $t\in \mathbb{R}\setminus \mathbb{Z}_{\ge0}$, we get
\begin{align*}
C^t\bigl[H^{(t)}\bigr](z) 
&=-\frac{z}{t} \frac{d}{dz}\log \left(1+ \sum_{k=1}^\infty   \frac{t^{2k}}{(t)_{\underline{2k}}} \cdot (-1)^k \frac{(t)_{\underline{2k}}}{t^k(2k)!!} z^{2k}\right)\\
&=-\frac{z}{t} \frac{d}{dz}\log \left(1+ \sum_{k=1}^\infty  \frac{(-tz^2/2)^k}{k!}  \right)\\
&=-\frac{z}{t} \frac{d}{dz}\log \left( \exp\left( -\frac{tz^2}{2} \right)\right)\\
&=z^2.
\end{align*}
 For $d\in \mathbb{Z}_{\ge 1}$, we have
$$
C^d\bigl[H^{(d)}\bigr](z) =\lim_{t\to d}C^t\bigl[H^{(t)}\bigr](z)  =z^2. 
$$
Therefore we get the following properties:
\begin{itemize}
\item $\kappa_2^t\bigl(H^{(t)}\bigr)=1$ and $\kappa_n^t\bigl(H^{(t)}\bigr)=0$ for all $n\neq 2$. 
\item For $d\in \mathbb{Z}_{\ge 1}$, we have
$$
H^{(d)}(z) = \iota_{d}(H_d)(z)\in U_d,
$$
where $H_d(x) = \sum_{k=0}^{\lfloor d/2 \rfloor} (-1)^k \frac{(d)_{\underline{2k}}}{d^k(2k)!!}x^{d-2k}$ is the Hermite polynomial. 
\end{itemize}
\end{example}
Finally, we show that the family is recursive with respect to $\boxplus^t$, as follows.
\begin{corollary}\label{cor:recursive}
For $t\in \mathbb{R}^\times$, we define $H_s^{(t)}\coloneqq D_{\sqrt{s}}\bigl(H^{(t)}\bigr)$ for each $s\ge 0$. Then we obtain
$$
H_{s_1}^{(t)} \boxplus^t H_{s_2}^{(t)} = H_{s_1+s_2}^{(t)}, \qquad s_1,s_2\ge 0.
$$
\end{corollary}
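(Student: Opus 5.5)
The plan is to use the cumulant transform. By Lemma~\ref{lem:injective} the map $C^t$ is injective on $U$, and by Theorem~\ref{t-cumulants}(iii) it turns $\boxplus^t$ into addition. So for $t\in\mathbb{R}\setminus\mathbb{Z}_{\ge0}$ it suffices to show
\[
C^t\bigl[H_{s_1}^{(t)}\boxplus^t H_{s_2}^{(t)}\bigr](z)=C^t\bigl[H_{s_1}^{(t)}\bigr](z)+C^t\bigl[H_{s_2}^{(t)}\bigr](z) \stackrel{?}{=} C^t\bigl[H_{s_1+s_2}^{(t)}\bigr](z).
\]
The second equality is the only thing that needs checking.

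First, compute $C^t[H_s^{(t)}]$. By the homogeneity argument in the proof of Theorem~\ref{t-cumulants}(ii) we have $C^t[D_r[A]](z)=C^t[A](rz)$. Combining this with Example~\ref{ex:Hermite}, which gives $C^t[H^{(t)}](z)=z^2$, we get
\[
C^t\bigl[H_s^{(t)}\bigr](z)=(\sqrt{s}\,z)^2=sz^2 .
\]
This is linear in $s$, so
\[
C^t[H_{s_1}^{(t)}]+C^t[H_{s_2}^{(t)}]=(s_1+s_2)z^2=C^t[H_{s_1+s_2}^{(t)}].
\]
Injectivity then gives the identity. The case $s=0$ needs no separate treatment: $D_0[H^{(t)}]=1$ and $C^t[1]=0$.

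Alternatively, one can argue directly at the level of $E^t$, with no appeal to injectivity. The proof of Theorem~\ref{t-cumulants}(iii) shows $E^t[A\boxplus^t B]=E^t[A]\,E^t[B]$, and $E^t[H_s^{(t)}](z)=\exp(-tsz^2/2)$. The product of two such exponentials is the exponential for $s_1+s_2$, and $E^t$ is plainly injective because it only rescales coefficients by the nonzero factors $t^k/(t)_{\underline{k}}$.

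The main obstacle is the parameter range. The corollary is stated for all $t\in\mathbb{R}^\times$, including positive integers $t=d$, while the cumulant machinery is set up for $t\notin\mathbb{Z}_{\ge0}$. For $t=d$, I would argue coefficientwise. Each coefficient of $H_{s_1}^{(t)}\boxplus^t H_{s_2}^{(t)}$ and of $H_{s_1+s_2}^{(t)}$ is a polynomial in $t$: after writing out the definitions, the factors $(t)_{\underline{2i}}(t)_{\underline{2j}}$ cancel and leave $(t)_{\underline{2k}}$ times a polynomial in $t^{-1}$ and $s_1,s_2$. Both sides are therefore continuous in $t$, and the identity passes to the limit $t\to d$. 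This matches the paper's convention that $\boxplus^d$ and $C^d$ are defined as limits. Concretely, the $z^{2k}$ coefficient of the convolution equals
\[
(t)_{\underline{2k}}\sum_{i+j=k}\frac{(-s_1/2)^i(-s_2/2)^j}{t^{k}\,i!\,j!}=(-1)^k\frac{(t)_{\underline{2k}}}{t^k(2k)!!}(s_1+s_2)^k,
\]
by the binomial theorem, using $(2k)!!=2^k k!$. This computation in fact gives a uniform direct proof for every $t\in\mathbb{R}^\times$, so I would present it as the main line if the limiting argument seems delicate.
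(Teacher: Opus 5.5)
Your first argument is exactly the paper's proof: $C^t[H_s^{(t)}](z)=sz^2$ by homogeneity and Example~\ref{ex:Hermite}, followed by additivity and Lemma~\ref{lem:injective}. Your $E^t$ variant is the same idea without the logarithm.

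The coefficientwise computation is a genuinely different and more elementary route, and it is correct. The factors $(t)_{\underline{2i}}(t)_{\underline{2j}}$ cancel, and the binomial theorem with $(2k)!!=2^k k!$ gives $(-1)^k(t)_{\underline{2k}}(s_1+s_2)^k/(t^k(2k)!!)$. At $t=d$ no limiting argument is needed. For $2k\le d$, every falling factorial in the denominators is nonzero. For $2k>d$, both sides vanish, because the sum in \eqref{d-convolution} stops at $d$ and $(d)_{\underline{2k}}=0$.

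What this buys you is a uniform proof for all $t\in\mathbb{R}^\times$, and your caution about $t=d$ is justified. With $C^d$ as explicitly defined on $U_d$, additivity under $\boxplus^d$ holds only modulo $z^{d+1}$. Likewise $C^d[H^{(d)}]=z^2$ holds only modulo $z^{d+1}$; for example, $C^2[H^{(2)}](z)=z^2/(1-z^2)\neq z^2$. So the paper's cumulant argument at $t=d$ goes through only after reading everything modulo $z^{d+1}$ and using that $C^d$ is injective modulo $z^{d+1}$ on $U_d$. Your direct computation avoids all of this.

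One minor point: the coefficients are Laurent polynomials in $t$, not polynomials. This is harmless, since only continuity on $t\neq 0$ is used, and you ultimately make the direct computation the main line anyway.
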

\begin{proof}

This result is already known in the setting of finite free probability (see \cite[Page 813]{marcus2022finite-ptarf}), 
and the same argument extends naturally to our $t$-deformed framework. 
Since $C^t\bigl[H^{(t)}\bigr](z) = z^2$, we obtain
\[
C^t\bigl[H_s^{(t)}\bigr](z) = s z^2, \qquad s \ge 0.
\]
Therefore,
\[
C^t\bigl[H_{s_1}^{(t)} \boxplus^t H_{s_2}^{(t)}\bigr](z)
= (s_1 + s_2) z^2
= C^t\bigl[H_{s_1+s_2}^{(t)}\bigr](z),
\]
which completes the proof by Lemma~\ref{lem:injective}.
\end{proof}

\begin{example}[Laguerre series]\label{ex:Laguerre}

For $t\in \mathbb{R}^\times$ and $\lambda>0$, we define
\[
L_\lambda^{(t)}(z) \coloneqq 1+ \sum_{k=1}^\infty (-1)^k \frac{(\lambda t)_{\underline{k}}(t)_{\underline{k}}}{t^k k !}z^k.
\]
We call $L_\lambda^{(t)}$ the \textit{$t$-deformed Laguerre series}. For $t\in \mathbb{R}\setminus \mathbb{Z}_{\ge0}$, we get
\begin{align*}
C^t\bigl[L_\lambda^{(t)}]\bigr(z) 
&=-\frac{z}{t} \frac{d}{dz} \log \left( 1+\sum_{k=1}^\infty (-1)^k \frac{(\lambda t)_{\underline{k}}}{k!}z^k\right)\\
&=-\frac{z}{t} \frac{d}{dz} \log \bigl(  (1-z)^{\lambda t}\bigr)\\
&=\frac{\lambda z}{1-z}.
\end{align*}
For $d \in \mathbb{Z}_{\ge1}$, we obtain
$$
C^d\bigl[L_\lambda^{(d)}\bigr](z) = \lim_{t\to d} C^t\bigl[L_\lambda^{(t)}\bigr](z)  = \frac{\lambda z}{1-z}.
$$
Consequently, we obtain the following results:
\begin{itemize}
\item $\kappa_n^t\bigl(L_\lambda^{(t)}\bigr)=\lambda$ for all $n\ge 1$.
\item For $\lambda_1, \lambda_2>0$, we have
\begin{align}\label{eq:Lag_convolution}
L_{\lambda_1}^{(t)} \boxplus^t L_{\lambda_2}^{(t)}  = L_{\lambda_1+\lambda_2}^{(t)}. 
\end{align}
\item For $d\in \mathbb{Z}_{\ge1}$, we get 
$$
L_\lambda^{(d)}(z) = \iota_d(L_{d,\lambda})\in U_d,
$$ 
where $L_{d,\lambda}(x)=\sum_{k=0}^d (-1)^k \frac{(\lambda d)_{\underline{k}}(d)_{\underline{k}}}{d^k k !}x^{d-k}$ is the Laguerre polynomial.
\end{itemize}
\end{example}

\subsection{Hypergeometric series}

The class of hypergeometric polynomials contains many important (real-rooted) polynomials, such as the Laguerre, Jacobi, and Bessel polynomials, and exhibits a rich structure. Recently, these polynomials have been studied in connection with finite free probability in \cite{campbell2025even, martinez2024real, martinez2025zeros}.

Recall the hypergeometric polynomials as follows. 
Let $i, j \in \mathbb{Z}_{\ge 0}$ and $d \in \mathbb{Z}_{\ge 1}$. 
Let $a_1, \dots, a_i \in \mathbb{R}_{|d} \coloneqq \mathbb{R} \setminus \left\{0, \frac{1}{d}, \dots, \frac{d-1}{d}\right\}$ and $b_1, \dots, b_j \in \mathbb{R}$. 
We define the {\it hypergeometric polynomial} as
\begin{align}\label{HGP}
\mathcal{H}_d \begin{bmatrix} b_1,\dots, b_j\\
a_1,\dots, a_i
\end{bmatrix}(x)\coloneqq  \sum_{k=0}^d(-1)^k \binom{d}{k} \frac{(db_1)_{\underline{k}}\cdots (db_j)_{\underline{k}}}{(da_1)_{\underline{k}}\cdots (da_i)_{\underline{k}}}x^{d-k}.
\end{align}
To simplify notation, for $\bm{a}=(a_1,\dots, a_i)$, we write
$$
(\bm{a})_{\underline{k}}\coloneqq \prod_{r=1}^i (a_r)_{\underline{k}}, \qquad (\bm{a})_{\bar{k}}\coloneqq \prod_{r=1}^i (a_r)_{\bar{k}},
$$
where $(a)_{\bar{k}}\coloneqq a(a+1)\cdots (a+k-1)$ for $a\in \mathbb{R}$. Then, for $\bm{a}=(a_1,\dots, a_i) \in (\mathbb{R}_{|d})^i$ and $\bm{b}=(b_1,\dots, b_j)\in \mathbb{R}^j$, the hypergeometric polynomial \eqref{HGP} is written by
$$
\mathcal{H}_d\begin{bmatrix} \bm{b}\\
\bm{a}
\end{bmatrix}(x) =\sum_{k=0}^d(-1)^k \binom{d}{k} \frac{(d\bm{b})_{\underline{k}}}{(d\bm{a})_{\underline{k}}}x^{d-k}.
$$

These polynomials are closely related to generalized hypergeometric series. More precisely,
$$
\mathcal{H}_d\begin{bmatrix} \bm{b}\\
\bm{a}
\end{bmatrix}(x)  = \frac{(-1)^d(d\bm{b})_{\underline{d}}}{(d\bm{a})_{\underline{d}}}
 \ {}_{i+1}F_j\left(\begin{array}{c}
-d, \ d\bm{a}-d+1 \\
d\bm{b}-d+1
\end{array};\, x\right),
$$
where $d\bm{a}-d+1 \coloneqq (da_1-d+1,\dots da_i-d+1)$ and for $\bm{c} \in \mathbb{R}^n$ and $\bm{d}\in \mathbb{R}^m$, we define
$$
{}_{m}F_n\left(\begin{array}{c}
\bm{d}\\
\bm{c}
\end{array};\, x\right) \coloneqq  \sum_{k=0}^\infty \frac{1}{k!} \frac{(\bm{d})_{\bar{k}}}{(\bm{c})_{\bar{k}}}  x^k,
$$
see \cite{koekoek2010hypergeometric} for details.

We extend hypergeometric polynomials to formal power series. Let $d\in \mathbb{Z}_{\ge1}$. We consider the image of a hypergeometric polynomial under the map $\iota_d$ introduced in Section \ref{sec2}. For $\bm{a} \in (\mathbb{R}_{|d})^i$ and $\bm{b}\in \mathbb{R}^j$, we have
\begin{align*}
\iota_d \left( \mathcal{H}_d\begin{bmatrix}
\bm{b}\\
\bm{a}
\end{bmatrix} \right)(z) = \sum_{k=0}^d(-1)^k \frac{(d)_{\underline{k}}}{k!} \frac{(d\bm{b})_{\underline{k}}}{(d\bm{a})_{\underline{k}}}z^k \in U_d.
\end{align*}
Note that $a \in \mathbb{R}_{|d}$ if and only if $da \notin \{0,1,\dots , d-1\}$. Motivated by this observation, we define an extension of hypergeometric polynomials as follows.
\begin{definition}
For a given $t\in \mathbb{R}^\times$, we consider $\bm{a}\in \mathbb{R}^i$ such that $ta_k \notin \mathbb{Z}_{\ge0}$ for all $k=1,\dots, i$ and $\bm{b}\in\mathbb{R}^j$. The {\it $t$-deformed Hypergeometric series} is defined by
$$
\mathcal{H}^{(t)} \begin{bmatrix}
\bm{b}\\
\bm{a}
\end{bmatrix} (z)\coloneqq   \sum_{k=0}^\infty(-1)^k \frac{(t)_{\underline{k}}}{k!} \frac{(t\bm{b})_{\underline{k}}}{(t\bm{a})_{\underline{k}}}z^k.
$$
\end{definition}

One can easily translate $t$-deformed hypergeometric series into generalized hypergeometric series. Since $(-\bm{a})_{\underline{k}} = (-1)^{ki} (\bm{a})_{\bar{k}}$ for $\bm{a} \in \mathbb{R}^i$, we have
\begin{align*}
\mathcal{H}^{(t)} \begin{bmatrix}
\bm{b}\\
\bm{a}
\end{bmatrix} (z) = \ {}_{i+1}F_j\left(\begin{array}{c}
-t, \ -t\bm{a} \\
-t\bm{b}
\end{array};\, (-1)^{j-i}z \right).
\end{align*}

\begin{example}
\begin{enumerate}[\rm (1)]
\item ($t$-deformed binomial series) For $\lambda\in \mathbb{R}$, we have
$$
D_\lambda\left[\mathcal{H}^{(t)} \begin{bmatrix}
-\\
-
\end{bmatrix} \right](z) = B_\lambda^{(t)}(z).
$$

\item ($t$-deformed Laguerre series) For $b>0$, we have
$$D_{1/t}\left[\mathcal{H}^{(t)} \begin{bmatrix}
b\\
-
\end{bmatrix} \right](z) = L_b^{(t)}(z).
$$

\item ($t$-deformed Bessel series) For $a\in \mathbb{R}$ with $ta \notin\mathbb{Z}_{\ge0}$, 
$$
\displaystyle D_{t}\left[\mathcal{H}^{(t)} \begin{bmatrix}
-\\
a
\end{bmatrix} \right](z) = \sum_{k=0}^\infty (-1)^k \frac{(t)_{\underline{k}}}{k!}\frac{t^k}{(ta)_{\underline{k}}} z^k.
$$

\item ($t$-deformed Jacobi series) For $a\in \mathbb{R}$ with $ta\notin\mathbb{Z}_{\ge0}$ and $b>0$,
$$
\mathcal{H}^{(t)} \begin{bmatrix}
b\\
a
\end{bmatrix} (z) =\sum_{k=0}^\infty(-1)^k \frac{(t)_{\underline{k}}}{k!} \frac{(tb)_{\underline{k}}}{(ta)_{\underline{k}}} z^k.
$$
\end{enumerate}
\end{example}

Finally, we show that $t$-deformed hypergeometric series are closed under $\boxplus^t$ subject to a certain condition. This result can be regarded as a $t$-analogue of \cite[(83),(84)]{martinez2024real}.
\begin{proposition}\label{prop:t-convolution:HGFPS}
Let us consider $t\in \mathbb{R}^\times$. If we consider $\bm{a}_\ell$ and $\bm{b}_\ell$ with lengths $i_\ell$ and $j_\ell$ ($\ell=1,2,3$), respectively. Then 
\begin{align}\label{HGSeq}
{}_{j_1} F_{i_1}\left(\begin{array}{c}
-t\bm{b}_1\\
-t \bm{a}_1
\end{array};\, x\right) 
{}_{j_2} F_{i_2}\left(\begin{array}{c}
-t\bm{b}_2\\
-t \bm{a}_2
\end{array};\, x\right)
=
{}_{j_3} F_{i_3}\left(\begin{array}{c}
-t\bm{b}_3\\
-t \bm{a}_3
\end{array};\, x\right)
\end{align}
if and only if
\begin{align}\label{HGFPS}
\mathcal{H}^{(t)}\begin{bmatrix}
\bm{b}_1\\
\bm{a}_1
\end{bmatrix} (s_1z)
\boxplus^t
\mathcal{H}^{(t)}\begin{bmatrix}
\bm{b}_2\\
\bm{a}_2
\end{bmatrix} (s_2z)
=
\mathcal{H}^{(t)}\begin{bmatrix}
\bm{b}_3\\
\bm{a}_3
\end{bmatrix} (s_3z),
\end{align}
where $s_\ell = (-1)^{i_\ell+ j_\ell +1}$ for $\ell =1,2,3$.
\end{proposition}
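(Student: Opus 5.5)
The plan is to use the linearizing transform $E^t$ from the proof of Theorem~\ref{t-cumulants}(iii). That proof shows that
\[
E^t[A\boxplus^t B](z)=E^t[A](z)\,E^t[B](z)
\]
for $A,B\in U$. The computation there is purely coefficientwise, so it holds for arbitrary formal power series if we set $E^t[A](z)=\sum_k \frac{t^k}{(t)_{\underline{k}}}a_kz^k$ without normalizing $a_0$. The map $E^t$ is a bijection of $\mathbb{R}\llbracket z\rrbracket$, since it multiplies the $k$-th coefficient by the nonzero number $t^k/(t)_{\underline{k}}$; here we use $t\notin\mathbb{Z}_{\ge0}$, and the case $t=d$ is handled by the limit convention of the paper. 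Therefore \eqref{HGFPS} holds if and only if the $E^t$-images of its two sides agree, i.e.
\[
E^t\bigl[\mathcal{H}^{(t)}[\bm b_1;\bm a_1](s_1\cdot)\bigr]\,E^t\bigl[\mathcal{H}^{(t)}[\bm b_2;\bm a_2](s_2\cdot)\bigr]=E^t\bigl[\mathcal{H}^{(t)}[\bm b_3;\bm a_3](s_3\cdot)\bigr].
\]

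The key step is to compute $E^t$ of a dilated $t$-deformed hypergeometric series and identify it with ${}_jF_i$ at a rescaled argument. The $k$-th coefficient of $\mathcal{H}^{(t)}[\bm b;\bm a](sz)$ is $(-1)^k s^k\frac{(t)_{\underline{k}}}{k!}\frac{(t\bm b)_{\underline{k}}}{(t\bm a)_{\underline{k}}}$. Applying $E^t$ cancels the factor $(t)_{\underline{k}}$ and leaves
\[
\frac{(-st)^k}{k!}\frac{(t\bm b)_{\underline{k}}}{(t\bm a)_{\underline{k}}}.
\]
We then use $(x)_{\underline{k}}=(-1)^k(-x)_{\bar k}$, exactly as in the translation preceding the proposition. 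This gives
\[
\frac{(t\bm b)_{\underline{k}}}{(t\bm a)_{\underline{k}}}=(-1)^{k(j+i)}\frac{(-t\bm b)_{\bar k}}{(-t\bm a)_{\bar k}},
\]
so the coefficient becomes $\frac{1}{k!}\frac{(-t\bm b)_{\bar k}}{(-t\bm a)_{\bar k}}\bigl((-1)^{1+i+j}st\bigr)^k$. With $s=(-1)^{i+j+1}$ the sign collapses to $+1$. Hence
\[
E^t\bigl[\mathcal{H}^{(t)}[\bm b;\bm a](sz)\bigr]={}_jF_i\left(\begin{array}{c}-t\bm b\\-t\bm a\end{array};\,tz\right).
\]
The choice of $s_\ell$ in the statement is designed precisely to produce this clean form. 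I expect this sign bookkeeping to be the only delicate point. I would verify it on the binomial case ($i=j=0$, $s=-1$), where $E^t[(1+z)^t]$ should give $e^{tz}={}_0F_0(;tz)$, and indeed it does.

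To conclude, the condition above becomes the identity \eqref{HGSeq} evaluated at $x=tz$. Since $t\neq0$, the substitution $x=tz$ is an automorphism of $\mathbb{R}\llbracket z\rrbracket$. So \eqref{HGSeq} as an identity of formal power series in $x$ is equivalent to the same identity in $tz$, which in turn is equivalent to \eqref{HGFPS} by the bijectivity of $E^t$ and its multiplicativity. This proves both directions simultaneously.
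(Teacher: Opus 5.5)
\textbf{The case $t\notin\mathbb{Z}_{\ge0}$.} The paper gives no proof of this proposition; it only points to the finite free analogue in the cited work. Your argument is the natural one the paper's machinery supports. It uses the coefficientwise multiplicativity $E^t[A\boxplus^t B]=E^t[A]\,E^t[B]$ from the proof of Theorem~\ref{t-cumulants}(iii), the bijectivity of $E^t$, and your sign bookkeeping, which is correct. For $t\in\mathbb{R}\setminus\mathbb{Z}_{\ge0}$ this is a complete proof.

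A minor simplification is available. Since $E^t[A](z)=\Phi_t(A)(tz)$, you can work with the isomorphism $\Phi_t$ of Section~\ref{sec:4}, which gives directly
\[
\Phi_t\Bigl(\mathcal{H}^{(t)}\begin{bmatrix}\bm b\\ \bm a\end{bmatrix}(s\,\cdot)\Bigr)(z)={}_jF_i\left(\begin{array}{c}-t\bm b\\ -t\bm a\end{array};\,z\right),
\]
so the final substitution $x=tz$ becomes unnecessary.

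\textbf{The case $t=d$.} What does not go through is your sentence that $t=d\in\mathbb{Z}_{\ge1}$, which is included in $t\in\mathbb{R}^\times$, is ``handled by the limit convention''.

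At $t=d$ the factors $t^k/(t)_{\underline{k}}$ are undefined for $k>d$. So $E^d$ is only a bijection of $\operatorname{Im}\iota_d$. Moreover $\boxplus^d$ truncates at degree $d$, so you only have $E^d[A\boxplus^d B]\equiv E^d[A]\,E^d[B]\pmod{z^{d+1}}$. Your computation therefore shows that \eqref{HGFPS} is equivalent to \eqref{HGSeq} \emph{modulo $x^{d+1}$}. That yields \eqref{HGSeq}$\Rightarrow$\eqref{HGFPS}, but not the converse.

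The converse is in fact false. Take $t=1$, all parameter lists empty for $\ell=1,2$, and $\bm a_3$ empty, $\bm b_3=(-2)$.
\begin{itemize}
\item Then $s_1=s_2=-1$ and $s_3=1$.
\item Both factors on the left of \eqref{HGFPS} equal $1+z$, the right-hand side is $1+2z$, and $(1+z)\boxplus^1(1+z)=1+2z$. So \eqref{HGFPS} holds.
\item But \eqref{HGSeq} reads $e^{x}\cdot e^{x}=(1-x)^{-2}$, which fails at the coefficient of $x^2$ ($2$ versus $3$).
\end{itemize}
No limit $t\to d$ can repair this, because \eqref{HGSeq} is a hypothesis at a single fixed value of $t$, not along a family.

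So you should state the equivalence only for $t\in\mathbb{R}\setminus\mathbb{Z}_{\ge0}$. At $t=d$ you can either prove only the forward implication, or read \eqref{HGSeq} modulo $x^{d+1}$. Under the latter reading, your argument run on $\operatorname{Im}\iota_d$ does give the equivalence.
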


\subsection{Limit theorems for $t$-deformed convolution}\label{sec2.3}

In this subsection, we establish the law of large numbers and the central limit theorem for the $t$-deformed convolution. We define the \textit{$t$-deformed convolution power} of $A$ by
\[
A^{\boxplus^t m} \coloneqq \underbrace{A\boxplus^t \cdots \boxplus^t A}_{m \text{ times}}.
\]
When considering the limit of a sequence $\{A_n(z)\}_{n\in\mathbb N}$ of formal power series, we equip $\mathbb{R}\llbracket z \rrbracket$ with the topology induced from the product topology on $\prod_{n=0}^{\infty}\mathbb{R}$, to which it is naturally homeomorphic.
Then $\{A_n(z)\}_{n\in\mathbb N}$ converges if and only if every coefficient of $A_{n}(z)$ converges. Moreover, this is equivalent to that every $t$-deformed cumulants of $A_n(z)$ converges.
Note that this topology is finer than the $(z)$-adic topology of $\mathbb{R}\llbracket z \rrbracket$. 

\begin{theorem}\label{limit_thms}
\begin{enumerate}[\rm (1)]
\item (\textit{Law of large numbers for $\boxplus^t$})\ For $t\in \mathbb{R}^\times$, let us consider $A\in U$ such that $\kappa_1^t(A)=\lambda$. Then
\begin{align*}
D_{1/m}\bigl[A^{\boxplus^t m}\bigr] (z) \to B_\lambda^{(t)}(z), \qquad m\to \infty.
\end{align*}
\item (\textit{Central limit theorem for $\boxplus^t$})\ For $t\in \mathbb{R}^\times$, let us consider $A\in U$ such that $\kappa_1^t(A)=0$ and $\kappa_2^t(A)=1$. Then
\begin{align*}
D_{1/\sqrt{m}} \bigl[A^{\boxplus^t m} \bigr] (z)  \to H^{(t)}(z), \qquad m \to\infty.
\end{align*} 
\end{enumerate}
\end{theorem}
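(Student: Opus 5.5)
The plan is to work entirely on the cumulant side, using Theorem \ref{t-cumulants}, and then translate back to coefficients. By additivity (iii), $C^t[A^{\boxplus^t m}](z) = m\,C^t[A](z)$, so $\kappa_n^t(A^{\boxplus^t m}) = m\,\kappa_n^t(A)$ for every $n$. By homogeneity (ii), for $r>0$,
\[
\kappa_n^t\bigl(D_r[A^{\boxplus^t m}]\bigr) = r^n m\,\kappa_n^t(A).
\]
For the law of large numbers we take $r=1/m$ and get $\kappa_n^t = m^{1-n}\kappa_n^t(A)$. This equals $\lambda$ for $n=1$ and tends to $0$ for $n\ge 2$. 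For the central limit theorem we take $r=1/\sqrt m$ and get $\kappa_n^t = m^{1-n/2}\kappa_n^t(A)$. This is $0$ for $n=1$ by hypothesis, equals $1$ for $n=2$, and tends to $0$ for $n\ge 3$. These limits are exactly the cumulants of $B_\lambda^{(t)}$ and $H^{(t)}$ computed in the examples of Section \ref{sec2.2}: $C^t[B_\lambda^{(t)}]=\lambda z$ and $C^t[H^{(t)}]=z^2$.

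The remaining step, which is the only real content, is to justify that coefficientwise convergence of the cumulant transform implies coefficientwise convergence of the series itself. The paper asserts this equivalence just before the theorem; I would verify it directly. From $tC^t[A]\cdot E^t[A] = -z\frac{d}{dz}E^t[A]$, comparing coefficients gives the recursion from the proof of Theorem \ref{t-cumulants}(i):
\[
-i\,\frac{t^i}{(t)_{\underline{i}}}\,a_i = t\sum_{j=1}^{i}\kappa_j^t(A)\,\frac{t^{i-j}}{(t)_{\underline{i-j}}}\,a_{i-j}.
\]
Since $t\notin\mathbb{Z}_{\ge0}$, we have $(t)_{\underline{i}}\neq 0$. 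Hence $a_i$ is a fixed polynomial, independent of $m$, in $\kappa_1^t,\dots,\kappa_i^t$. Polynomials are continuous, so convergence of each cumulant gives convergence of each coefficient. The limit coefficients are the same polynomials evaluated at the limit cumulants, that is, the coefficients of the unique element of $U$ with those cumulants. By Lemma \ref{lem:injective} this element is $B_\lambda^{(t)}$, respectively $H^{(t)}$.

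The main point of care is the parameter range. The theorem is stated for $t\in\mathbb{R}^\times$, so the case $t=d\in\mathbb{Z}_{\ge1}$ must also be handled. There I would restrict to $A\in U_d$, noting that $U_d$ is closed under $\boxplus^d$ and under dilation. On $U_d$ the map $(a_1,\dots,a_d)\mapsto(\kappa_1^d,\dots,\kappa_d^d)$ is still a polynomial bijection, because $(d)_{\underline{i}}\neq0$ for $i\le d$. Alternatively, one can pass to the limit $t\to d$ as the paper indicates; the argument is then identical with the truncation mod $z^{d+1}$.
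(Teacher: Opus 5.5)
Your proposal is correct and follows the paper's proof. Additivity and homogeneity give $\kappa_n^t\bigl(D_{1/m}[A^{\boxplus^t m}]\bigr)=m^{1-n}\kappa_n^t(A)$ (resp.\ $m^{1-n/2}\kappa_n^t(A)$ for the $1/\sqrt{m}$ scaling), the limits match $C^t[B_\lambda^{(t)}]=\lambda z$ and $C^t[H^{(t)}]=z^2$, and Lemma~\ref{lem:injective} identifies the limit. Beyond that, your recursion expressing each $a_i$ as a fixed polynomial in $\kappa_1^t,\dots,\kappa_i^t$ supplies the continuity step that the paper only asserts before the theorem (injectivity alone does not give convergence), and your handling of $t=d$ on $U_d$ fits the paper's convention.
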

\begin{proof}
\begin{enumerate}[\rm (1)]
\item By Theorem \ref{t-cumulants} (2),(3), for each $n\in \mathbb{Z}_{\ge1}$, we have
\begin{align*}
\kappa_n^t\bigl(D_{1/m}\bigl[A^{\boxplus^t m}\bigr]\bigr) =  \frac{1}{m^{n-1}} \kappa_n^t(A).
\end{align*}
Therefore
\begin{align*}
\lim_{m\to \infty} C^t\bigl[D_{1/m}\bigl[A^{\boxplus^t m}\bigr]\bigr](z) =\lambda z = C^t\bigl[B_\lambda^{(t)}\bigr](z).
\end{align*}
Finally, we have $\lim_{m\to \infty}D_{1/m}\bigl[A^{\boxplus^t m}\bigr](z) = B_\lambda^{(t)}(z)$ by Lemma \ref{lem:injective}.

\item By Theorem \ref{t-cumulants} (2),(3), for each $n\in \mathbb{Z}_{\ge1}$, we have
\begin{align*}
\kappa_n^t\bigl(D_{1/\sqrt{m}}\bigl[A^{\boxplus^t m}\bigr]\bigr) =  \frac{1}{m^{n/2-1}} \kappa_n^t(A).
\end{align*}
Therefore
\begin{align*}
\lim_{m\to \infty} C^t\bigl[D_{1/m}\bigl[A^{\boxplus^t m}\bigr]\bigr](z) =z^2 = C^t\bigl[H^{(t)}\bigr](z).
\end{align*}
Finally, we have $\lim_{m\to \infty}D_{1/\sqrt{m}}\bigl[A^{\boxplus^t m}\bigr](z) = H^{(t)}(z)$ by Lemma \ref{lem:injective}.
\end{enumerate}
\end{proof}

\section{Applications to probability theory}\label{sec:3}

In this section, we focus on the case $t=-1$ to apply our theorems to probability theory. In this case, the $t$-deformed convolution is given by 
\begin{align}\label{-1-convolution}
(A \boxplus^{-1} B)(z) = \sum_{k=0}^{\infty}\sum_{i+j=k}\binom{k}{i}a_{i}b_{j}z^{k}.
\end{align}
Under its specialization, several important formal power series admit natural interpretations from classical probabilistic viewpoints. 

\subsection{Classical cumulant transform}\label{sec3.1}

For a real-valued random variable $X$ on some probability space, we define the ordinary and exponential generating functions of $X$ as
\[
M_X(z) \coloneqq \sum_{k=0}^\infty  \mathbb{E}[X^k] z^k \quad \text{and} \quad \Psi_X(z) \coloneqq \sum_{k=0}^\infty \frac{\mathbb{E}[X^k]}{k!} z^k,
\]
respectively. Moreover, the \textit{classical cumulant transform} of $X$ is defined by
\[
C_X^\ast(z) \coloneqq \log\bigl(\Psi_X({\rm i} z)\bigr).
\]
It is well-known that $C_X^\ast(z)$ behaves well under the summation of independent random variables. 
More precisely, if $X$ and $Y$ are independent real-valued random variables, 
then 
$$
C_{X+Y}^\ast(z) = C_{X}^\ast(z) + C_{Y}^\ast(z).
$$ 
In particular, if $X$ has all finite moments, then there exists a unique sequence $\{r_n(X)\}_n \subset \mathbb{R}$ such that
\[
C_X^\ast(z) =\sum_{n=1}^\infty r_n(X)z^n
\]
as formal power series. The value $r_n(X)$ is called the $n$-th \textit{cumulant} of $X$.

We will show that $M_{X}(z)$ behaves well under the convolution $\boxplus^{-1}$ and that the $(-1)$-deformed cumulant transform is closely related to the classical cumulant transform. 
\begin{theorem}\label{Classicalresult}
The following statements hold.
\begin{enumerate}[\rm (1)]
\item If a real-valued random variable $X$ and $Y$ are independent, then $M_{X+Y} = M_{X} \boxplus^{-1} M_{Y}$. 
\item $\displaystyle C^{-1}[M_X]({\rm i} z) =  z\frac{d}{dz}C_X^\ast(z)$.
\item If $X$ has all finite moments, then $nr_n(X) = {\rm i}^n \kappa_n^{-1}(M_X)$ for all $n\in \mathbb{Z}_{\ge1}$.
\end{enumerate}
\end{theorem}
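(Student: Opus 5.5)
The plan is to compute everything directly from the definitions at $t=-1$. First I record two elementary facts: $(-1)_{\underline{k}} = (-1)(-2)\cdots(-k) = (-1)^k k!$, and consequently $\frac{t^k}{(t)_{\underline{k}}}\big|_{t=-1} = \frac{(-1)^k}{(-1)^k k!} = \frac{1}{k!}$. This yields the key identity
\begin{align*}
E^{-1}[A](z) = \sum_{k=0}^\infty \frac{a_k}{k!} z^k ,
\end{align*}
so that $E^{-1}$ converts an ordinary generating function into the corresponding exponential generating function. In particular $E^{-1}[M_X] = \Psi_X$ as formal power series.

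For (1), I expand $M_{X+Y}$ using independence and the binomial theorem:
\begin{align*}
\mathbb{E}[(X+Y)^k]=\sum_{i+j=k}\binom{k}{i}\mathbb{E}[X^i]\,\mathbb{E}[Y^j].
\end{align*}
This is exactly the coefficient of $z^k$ in \eqref{-1-convolution}, because $\frac{(t)_{\underline{k}}}{(t)_{\underline{i}}(t)_{\underline{j}}}$ at $t=-1$ equals $\frac{(-1)^k k!}{(-1)^i i!\,(-1)^j j!}=\binom{k}{i}$. Alternatively, one may note that $E^{-1}[M_X\boxplus^{-1}M_Y]=\Psi_X\Psi_Y=\Psi_{X+Y}$, as in the proof of Theorem \ref{t-cumulants}(iii), and then use injectivity of $E^{-1}$. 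The only implicit assumption is that the relevant moments exist, so that all the series make sense formally.

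For (2), I substitute into the definition \eqref{C^t-E^t[A]} with $t=-1$ to get $C^{-1}[M_X](w) = w\frac{d}{dw}\log \Psi_X(w)$. Next I set $w={\rm i}z$ and apply the chain rule. Since $C_X^\ast(z)=\log\Psi_X({\rm i}z)$, we have $z\frac{d}{dz}C^\ast_X(z) = {\rm i}z\,\Psi_X'({\rm i}z)/\Psi_X({\rm i}z)$, which is $w\frac{d}{dw}\log\Psi_X(w)$ evaluated at $w={\rm i}z$. This gives (2).

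For (3), I compare coefficients in (2). The left side is $\sum_n \kappa_n^{-1}(M_X)\,{\rm i}^n z^n$, and the right side is $\sum_n n r_n(X) z^n$.

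The main (mild) obstacle is the justification of the formal substitution $z\mapsto {\rm i}z$ and of taking logarithms over $\mathbb{C}\llbracket z\rrbracket$. I will address it by working in $\mathbb{C}\llbracket z\rrbracket$, where dilation by ${\rm i}$ commutes with $\log$ and with $z\frac{d}{dz}$, exactly as in Lemma \ref{homogeneity}.
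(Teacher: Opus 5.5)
Your proposal is correct and follows essentially the same route as the paper. Both use $(-1)_{\underline{k}}=(-1)^k k!$ to get $E^{-1}[M_X]=\Psi_X$, independence plus the binomial theorem for (1), the chain rule under $w={\rm i}z$ for (2), and coefficient comparison for (3). Your explicit check that the $\boxplus^{-1}$ coefficients reduce to $\binom{k}{i}$, and your remark about working in $\mathbb{C}\llbracket z\rrbracket$ for the substitution $z\mapsto {\rm i}z$, simply make explicit what the paper leaves implicit.
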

\begin{proof}
\begin{enumerate}[\rm (1)]
\item By \eqref{-1-convolution}, we have
\begin{align*}
(M_X\boxplus^{-1}M_Y)(z) 
&= \sum_{k=0}^\infty \sum_{i+j=k} \binom{k}{i}\mathbb{E}[X^i] \mathbb{E}[Y^j]  z^k\\
&=\sum_{k=0}^\infty \mathbb{E}\left[\sum_{i+j=k} \binom{k}{i} X^i Y^j\right] z^k \\
&=\sum_{k=0}^\infty \mathbb{E}[(X+Y)^k] z^k,
\end{align*}
where the second equality holds since $X$ and $Y$ are independent.
Therefore we obtain $(M_X\boxplus^{-1}M_Y)(z)=M_{X+Y}(z)$.

\item A direct computation shows
\begin{align*}
E^{-1}[M_X](z) 
&= 1+ \sum_{k=1}^\infty  \frac{(-1)^k}{(-1)_{\underline{k}}} \mathbb{E}[X^k]z^k \\
&= 1+ \sum_{k=1}^\infty \frac{\mathbb{E}[X^k]}{k!} z^k  =\Psi_X(z).
\end{align*}
Consequently, we obtain
\begin{align*}
C^{-1}[M_X]({\rm i}z) 
&= z\frac{d}{dz} \log \bigl(E^{-1}[M_X]({\rm i}z) \bigr)\\
&= z\frac{d}{dz} \log \bigl(\Psi_X({\rm i}z)\bigr) = z\frac{d}{dz}C_X^\ast(z).
\end{align*}

\item By the above formula, we obtain
$$
\sum_{n=1}^\infty {\rm i}^n \kappa_n^{-1}(M_X)z^n = \sum_{n=1}^\infty n r_n(X) z^n.
$$
By comparing coefficients, we conclude that $ {\rm i}^n \kappa_n^{-1}(M_X)=n r_n(X)$ for all $n\in \mathbb{Z}_{\ge1}$.
\end{enumerate}
\end{proof}

\subsection{Mixture of beta and gamma distributions}\label{sec3.2}

In this section, we focus on the case $t = -1$ for $t$-deformed hypergeometric series. In this case, these formal power series are related to mixtures of certain beta and gamma distributions. Recall that the {\it beta distribution} with parameter $(\alpha,\beta)\in \mathbb{R}_{>0}^2$, denoted by $\text{Beta}(\alpha, \beta)$, is the probability measure on $\mathbb{R}$ whose probability density function is given by
$$
\frac{1}{B(\alpha,\beta)} x^{\alpha-1}(1-x)^{\beta-1} \mathbf{1}_{[0,1]}(x),
$$
where $B(\alpha,\beta)$ is the beta function. The {\it gamma distribution} with parameter $\alpha>0$, denoted by $\text{Ga}(\alpha)$, is the probability measure on $\mathbb{R}$ whose probability density function is given by
$$
\frac{1}{\Gamma(\alpha)} x^{\alpha-1} e^{-x} \mathbf{1}_{[0,\infty)}(x),
$$
where $\Gamma(\alpha)$ is the gamma function.

In what follows, we make use of the relation
$$
(a)_{\overline{k}} = \frac{\Gamma(a+k)}{\Gamma(a)}, \qquad a>0, \ k\ge 1.
$$

\begin{proposition}\label{thm:Meijer}
Consider $j \ge i \ge 1$ with $j-i \in 2\mathbb{Z}_{\ge0}$. Let $\bm{a}=(a_1,\dots, a_i) \in \mathbb{R}_{>0}^i$ and $\bm{b}=(b_1,\dots, b_j) \in \mathbb{R}_{>0}^j$ be tuples such that $a_r-b_r>0$ for all $1\le r \le i$. Then we obtain
$$
\mathcal{H}^{(-1)} \begin{bmatrix}
\bm{b}\\
\bm{a}
\end{bmatrix} (z) = M_X(z)
$$
where $X$ is a random variable distributed as 
$$
\text{\rm Beta}(b_1, a_1-b_1) \circledast \cdots \circledast \text{\rm Beta}(b_i, a_i-b_i) \circledast \text{\rm Ga}(b_{i+1})\circledast \dots \circledast \text{\rm Ga}(b_j),
$$
where $\mu \circledast \nu$ denotes the classical multiplicative convolution, that is, the distribution of the product $XY$ of two independent positive random variables $X \sim \mu$ and $Y \sim \nu$.
\end{proposition}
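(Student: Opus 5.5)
The plan is a direct coefficient comparison: I will compute the coefficients of $\mathcal{H}^{(-1)}\begin{bmatrix}\bm{b}\\ \bm{a}\end{bmatrix}(z)$ explicitly, compute the moments $\mathbb{E}[X^k]$, and check that they agree for every $k\ge 0$. Since $M_X$ is defined as the formal power series $\sum_k \mathbb{E}[X^k]z^k$, equality of coefficients is exactly what is required. Every moment of $X$ is finite: beta laws are supported in $[0,1]$, gamma laws have all moments, and the factors are independent. So $M_X$ is a well-defined formal power series and no analytic issue arises.

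\textbf{Step 1: coefficients of the series.} Put $t=-1$ in the definition. We have $(-1)_{\underline{k}}=(-1)(-2)\cdots(-k)=(-1)^k k!$, so the prefactor $(-1)^k\frac{(t)_{\underline{k}}}{k!}$ equals $1$. Next, $(-b)_{\underline{k}}=(-1)^k (b)_{\overline{k}}$, hence $(t\bm{b})_{\underline{k}}=(-1)^{kj}(\bm{b})_{\overline{k}}$ and $(t\bm{a})_{\underline{k}}=(-1)^{ki}(\bm{a})_{\overline{k}}$. The denominators are nonzero because $ta_r=-a_r<0$, so the admissibility condition $ta_r\notin\mathbb{Z}_{\ge0}$ holds. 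The overall sign is $(-1)^{k(j-i)}=1$, since $j-i$ is even. Therefore the coefficient of $z^k$ is
$$
\prod_{r=1}^{i}\frac{(b_r)_{\overline{k}}}{(a_r)_{\overline{k}}}\prod_{r=i+1}^{j}(b_r)_{\overline{k}}.
$$

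\textbf{Step 2: moments of the factors.} For $Y\sim\mathrm{Beta}(b,a-b)$, which is a genuine beta law because $a-b>0$ and $b>0$, we have
$$
\mathbb{E}[Y^k]=\frac{B(b+k,a-b)}{B(b,a-b)}=\frac{\Gamma(b+k)\Gamma(a)}{\Gamma(b)\Gamma(a+k)}=\frac{(b)_{\overline{k}}}{(a)_{\overline{k}}},
$$
using $(a)_{\overline{k}}=\Gamma(a+k)/\Gamma(a)$. For $Z\sim\mathrm{Ga}(b)$, we have $\mathbb{E}[Z^k]=\Gamma(b+k)/\Gamma(b)=(b)_{\overline{k}}$.

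\textbf{Step 3: moments of the product.} The law of $X$ is the distribution of a product of independent positive random variables with these laws. By independence, $\mathbb{E}[X^k]$ is the product of the individual $k$-th moments. This reproduces exactly the coefficient found in Step 1, which proves the identity.

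The only delicate point is the sign bookkeeping in Step 1. This is precisely where the hypothesis $j-i\in 2\mathbb{Z}_{\ge0}$ is needed. The hypotheses $a_r>b_r>0$ are used only to make the beta and gamma factors genuine probability distributions.
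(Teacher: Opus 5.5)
Your proposal is correct and follows the paper's proof: set $t=-1$, use $(-1)_{\underline{k}}=(-1)^k k!$ and $(-\bm{b})_{\underline{k}}=(-1)^{kj}(\bm{b})_{\bar{k}}$ so that the sign $(-1)^{k(j-i)}$ equals $1$ by parity, then match coefficients with $\mathbb{E}[X^k]$ computed from the Gamma-function moment formulas and independence. Your remarks on admissibility ($-a_r\notin\mathbb{Z}_{\ge0}$) and on the finiteness of all moments are small points that the paper leaves implicit.
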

\begin{proof}
Notice that, for all $k \in \mathbb{Z}_{\ge1}$,
$$
\mathbb{E}[X^k] = \frac{\prod_{s=1}^j \Gamma(b_s +k)}{\prod_{s=1}^j \Gamma(b_s)} \cdot \frac{\prod_{r=1}^i \Gamma(a_r)}{\prod_{r=1}^i \Gamma(a_r+k)}.
$$
On the other hand, we have
\begin{align*}
\mathcal{H}^{(-1)} \begin{bmatrix}
\bm{b}\\
\bm{a}
\end{bmatrix} (z) 
&=\sum_{k=0}^\infty (-1)^k \frac{(-1)_{\underline{k}}}{k!} \frac{(- \bm{b})_{\underline{k}}}{(-\bm{a})_{\underline{k}}}z^k\\
&=\sum_{k=0}^\infty \frac{(-1)^{kj} (\bm{b})_{\bar{k}}}{(-1)^{ki} (\bm{a})_{\bar{k}}}z^k\\
&=\sum_{k=0}^\infty (-1)^{k(j-i)} \frac{\prod_{s=1}^j (b_s)_{\bar{k}}}{\prod_{r=1}^i (a_r)_{\bar{k}}}z^k\\
&=\sum_{k=0}^\infty  \frac{\prod_{s=1}^j\Gamma(b_s +k)}{\prod_{s=1}^j\Gamma(b_s)} \frac{\prod_{r=1}^i\Gamma(a_r)}{\prod_{r=1}^i\Gamma(a_r+k)}z^k\\
&=\sum_{k=0}^\infty \mathbb{E}[X^k]z^k = M_X(z).
\end{align*}
\end{proof}

The $t$-deformed Laguerre series can be understood as the ordinary generating function of gamma random variables. More precisely, if $b>0$, then
\begin{align}\label{Laguerre-Gamma}
L_b^{(-1)}(z) = \sum_{k=0}^\infty  (b)_{\overline{k}}z^k = M_X(z),
\end{align}
where a random variable $X$ is distributed as ${\rm Ga}(b)$.

\subsection{Self-decomposable random variables}\label{sec3.3}

A real-valued random variable $X$ is said to be \textit{self-decomposable} if for any $c\in (0,1)$ there exists a random variable $X_c$ such that $X_c$ and $X$ are independent and $X \overset{d}{=} cX+X_c$, where $\overset{d}{=}$ denotes equality in distribution. That is, if we denote by $\mu$ the law of self-decomposable random variable $X$, then for any $c \in (0,1)$, there exists a probability measure $\mu_c$ on $\mathbb{R}$ such that
$$
\mu = D_c(\mu) \ast \mu_c,
$$
where $D_c(\mu)$ is the dilation by $c$, namely $D_c\mu(B)=\mu(c^{-1}B)$ for all Borel sets $B$ in $\mathbb{R}$.
A distribution $\mu$ of a self-decomposable random variable is known to be {\it infinitely divisible}, that is, for any $n\in\mathbb{N}$, there exists a (unique) probability measure $\mu_n$ on $\mathbb{R}$ such that 
$$
\mu= \underbrace{\mu_n\ast \cdots \ast \mu_n}_{n \text{ times}}.
$$
A one-dimensional real-valued stochastic process $\{X(s)\}_{s\ge0}$ on some probability space $(\Omega,\mathcal{F},P)$ is called a (one-dimensional) {\it L\'{e}vy process} if it satisfies the following properties:
\begin{enumerate}[\rm (i)]
\item $X(0)=0$ a.s.;
\item ({\it independent increments}) for any $0\le t_0<t_1<\cdots<t_n$, the random variables $X(t_1)-X(t_0), X(t_2)-X(t_1),\dots, X(t_n)-X(t_{n-1})$ are independent;
\item ({\it stationary increments}) for any $s,t>0$, the increment $X(t+s) - X(s)$ has the same distribution as $X(t)$;
\item ({\it stochastic continuity}) for any $\epsilon>0$ and $t\ge 0$,
$$
\lim_{s\to t} P\bigl(|X(s)-X(t)|>\epsilon \bigr) = 0;
$$
\end{enumerate}
In this case, it is known that $\{X(s)\}_{s\ge0}$ admits a c\'{a}dl\'{a}g modification. Hence, without loss of generality, we may assume that it is c\'{a}dl\'{a}g (i.e. right-continuous with left limits). It is well known that for each $t \ge 0$, the distribution of $X(t)$ is infinitely divisible; see e.g. \cite{ken1999levy} for details.

According to \cite[Theorem 3.2]{jurek1983integral}, there exists a one-dimensional L\'{e}vy process $\{Z(s)\}_{s\ge0}$ such that $\mathbb{E}[1+\log|Z(1)|]<\infty$ and
$$
X \overset{d}{=} \int_0^\infty e^{-s} dZ(s) ,
$$
where the right hand side is defined as the stochastic integral with respect to the L\'{e}vy process $\{Z(s)\}_{s\ge0}$. The L\'{e}vy process $\{Z(s)\}_{s\ge0}$ is called the \textit{background driving L\'{e}vy process} with respect to $X$. According to \cite[Page 6]{jurek2001remarks}, a useful formula to understand processes $\{Z(s)\}_{s\ge0}$ is known:
$$
C_{Z(s)}^\ast(z) =s z \frac{d}{dz} C_X^\ast(z), \qquad s\ge 0.
$$
Consequently, we have the following translation by using Theorem \ref{Classicalresult} (2).

\begin{proposition}\label{thm:BGL}
Let $X$ be a self-decomposable random variable with a background driving L\'{e}vy process $\{Z(s)\}_{s\ge0}$. For any $s\ge0$, we have
$$
C_{Z(s)}^\ast(z) = sC^{-1}[M_X]({\rm i}z).
$$
\end{proposition}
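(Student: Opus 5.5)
The proof is a direct substitution. We combine the Jurek formula quoted just before the statement with Theorem~\ref{Classicalresult}(2). The Jurek formula says that for a self-decomposable $X$ with background driving L\'{e}vy process $\{Z(s)\}_{s\ge0}$,
$$
C_{Z(s)}^\ast(z) = s z \frac{d}{dz} C_X^\ast(z), \qquad s \ge 0.
$$
Theorem~\ref{Classicalresult}(2), applied to the random variable $X$, gives $C^{-1}[M_X]({\rm i}z) = z\frac{d}{dz}C_X^\ast(z)$. Substituting the second identity into the first yields $C_{Z(s)}^\ast(z) = sC^{-1}[M_X]({\rm i}z)$ for every $s \ge 0$, which is the claim.

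The one point that needs care is in what sense these identities hold. Theorem~\ref{Classicalresult}(2) is an identity of formal power series. It is obtained from $E^{-1}[M_X] = \Psi_X$ and the definition of $C^{-1}$ through $-\frac{z}{t}\frac{d}{dz}\log$ at $t=-1$. Its meaning therefore presupposes that $X$ has all moments finite, so that $M_X$ is a well-defined element of $U$.

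I will handle this in one of two ways. The first option is to read the statement as an identity of formal power series under the standing assumption that $X$ has finite moments of all orders; this assumption is implicit in writing $M_X$ at all. The second option is to note that the Jurek formula is an identity of analytic functions near $0$ when the moment generating function of $X$ is finite near $0$. In that case both sides are convergent power series and can be compared coefficientwise.

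Either way, I expect no real obstacle. The only thing to verify is that the differential operator $z\frac{d}{dz}$ commutes with the substitution $z\mapsto{\rm i}z$ in the way used in Theorem~\ref{Classicalresult}(2). Indeed, $z\frac{d}{dz}[F({\rm i}z)] = ({\rm i}z)F'({\rm i}z)$, so the operator is invariant under dilations. This invariance was already used in proving Theorem~\ref{Classicalresult}(2), so the argument reduces to citing that result.
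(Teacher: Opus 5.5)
Your proposal is correct and follows the paper's argument exactly: the paper obtains the proposition by substituting Theorem~\ref{Classicalresult}(2), $C^{-1}[M_X]({\rm i}z) = z\frac{d}{dz}C_X^\ast(z)$, into Jurek's formula $C_{Z(s)}^\ast(z) = s z\frac{d}{dz}C_X^\ast(z)$. Your remarks on the sense in which these identities hold (finite moments of all orders, so that the formal power series make sense and can be compared coefficientwise) are a reasonable clarification that the paper leaves implicit.
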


\begin{example}
\begin{enumerate}[\rm (1)]
\item Let $X$ be a random variable distributed as the standard normal distribution $N(0,1)$. One can verify that $\mathbb{E}[X^{2k}] = (2k-1)!!$, and therefore
\begin{align*}
H^{(-1)}(z) = \sum_{k=0}^\infty (2k-1)!! z^{2k} = M_X(z),
\end{align*}
where we understand $(2k-1)!!=1$ when $k=0$. Moreover, it is known that $X$ is self-decomposable and its background driving L\'{e}vy process $\{Z(s)\}_{s\ge0}$ is a one-dimensional Brownian motion $\{B(s)\}_{s\ge0}$, that is, a one-dimensional L\'{e}vy process such that $Z(1)$ follows the standard normal distribution $N(0,1)$. By Theorem \ref{thm:BGL}, we have
$$
C_{B(s)}^\ast (z) = s C^{-1}\bigl[H^{(-1)}\bigr]({\rm i}z), \qquad s\ge0.
$$

\item Let $X$ be a random variable distributed as the gamma distribution $\text{Ga}(\lambda)$. 
Then $X$ is self-decomposable. It is known that its background driving L\'{e}vy process $\{Z(s)\}_{s\ge0}$ is a compound Poisson process $\{G(s)\}_{s\ge0}$ with intensity $\lambda$ and standard exponential jumps; see \cite[Lemma 1]{jurek2022background}. Thus, \eqref{Laguerre-Gamma} and Theorem \ref{thm:BGL} imply that
$$
C_{G(s)}^\ast (z) = s C^{-1}\bigl[L_\lambda^{(-1)}\bigr]({\rm i}z), \qquad s\ge0.
$$
\end{enumerate}
\end{example}

\section{$(t,r)$-deformed Infinitesimal generators}\label{sec:4}

In this section, we introduce Banach algebras constructed from formal power series by imposing analytic growth conditions on their coefficients. We then define the $t$-deformed convolution operator on this space and study their infinitesimal generators.

\subsection{Banach algebra $(\mathcal{A}_r^{(t)},\boxplus^t)$}

For $r>0$ and $t\in\mathbb{R}\setminus \mathbb{Z}_{\ge0}$, we define
\begin{align*}
\mathcal{A}_{r}^{(t)}\coloneqq \left\{ \sum_{k=0}^\infty a_k z^k \in  \mathbb{R}\llbracket z \rrbracket: \sum_{k=0}^\infty \left| \frac{a_k}{(t)_{\underline{k}}}\right|r^k <\infty\right\}.
\end{align*}
Then $(\mathcal{A}_r^{(t)}, \boxplus^t)$ is a unital commutative algebra with unit element $1$. We also define
$$
\mathcal{A}_r\coloneqq \left\{ \sum_{k=0}^\infty a_k z^k\in \mathbb{R}\llbracket z \rrbracket : \sum_{k=0}^\infty |a_k|r^k <\infty \right\}.
$$
It is well known that $(\mathcal{A}_r, \cdot)$ is a (unital) Banach algebra over $\mathbb{R}$ with norm
$$
\|A\|_r := \sum_{k=0}^\infty |a_k|r^k.
$$

\begin{proposition}
For $r>0$ and $t\in\mathbb{R}\setminus \mathbb{Z}_{\ge0}$, the space $(\mathcal{A}_{r}^{(t)},\boxplus^t)$ is a Banach algebra over $\mathbb{R}$ with norm
$$
\|A\|_r^{(t)} \coloneqq  \sum_{k=0}^\infty \left| \frac{a_k}{(t)_{\underline{k}}}\right|r^k
\quad \text{for} \quad A(z)=\sum_{k=0}^\infty a_kz^k \in\mathcal{A}_{r}^{(t)},
$$
and is isometrically isomorphic to $(\mathcal{A}_r, \cdot)$.
\end{proposition}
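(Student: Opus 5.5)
The plan is to use the transform $\Phi_t\colon \mathbb{R}\llbracket z \rrbracket \to \mathbb{R}\llbracket z \rrbracket$, $\Phi_t\bigl(\sum_k a_k z^k\bigr) = \sum_k \frac{a_k}{(t)_{\underline{k}}} z^k$, which is well defined because $t\notin\mathbb{Z}_{\ge0}$ gives $(t)_{\underline{k}}\neq 0$ for all $k$. I will show that $\Phi_t$ restricts to an isometric algebra isomorphism from $(\mathcal{A}_r^{(t)},\boxplus^t)$ onto $(\mathcal{A}_r,\cdot)$. Once this is done, completeness, submultiplicativity of the norm and the unit property all transfer from the known Banach algebra $(\mathcal{A}_r,\cdot)$.

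\textbf{Step 1 (linear isometric bijection).} $\Phi_t$ is linear with inverse $\sum_k b_k z^k \mapsto \sum_k (t)_{\underline{k}} b_k z^k$. By the definitions of the two norms, $\|\Phi_t(A)\|_r = \sum_k |a_k/(t)_{\underline{k}}|\,r^k = \|A\|_r^{(t)}$. Hence $A\in\mathcal{A}_r^{(t)}$ if and only if $\Phi_t(A)\in\mathcal{A}_r$, so $\Phi_t$ maps $\mathcal{A}_r^{(t)}$ isometrically onto $\mathcal{A}_r$. It follows that $\|\cdot\|_r^{(t)}$ is a norm and that $\mathcal{A}_r^{(t)}$ is complete, since a Cauchy sequence in $\mathcal{A}_r^{(t)}$ maps to a Cauchy sequence in $\mathcal{A}_r$, converges there, and can be pulled back.

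\textbf{Step 2 (multiplicativity).} This is the key computation, done exactly as in the proof of Theorem~\ref{t-cumulants}(iii). For arbitrary formal power series $A,B$, the coefficient of $z^k$ in $\Phi_t(A\boxplus^t B)$ is
\[
\frac{1}{(t)_{\underline{k}}}\sum_{i+j=k}\frac{(t)_{\underline{k}}}{(t)_{\underline{i}}(t)_{\underline{j}}}a_i b_j=\sum_{i+j=k}\frac{a_i}{(t)_{\underline{i}}}\frac{b_j}{(t)_{\underline{j}}},
\]
which is the coefficient of $z^k$ in $\Phi_t(A)\,\Phi_t(B)$. So $\Phi_t(A\boxplus^t B)=\Phi_t(A)\Phi_t(B)$, and $\Phi_t(1)=1$. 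Because $\mathcal{A}_r$ is closed under multiplication, $\mathcal{A}_r^{(t)}$ is closed under $\boxplus^t$. The same identity shows that $\boxplus^t$ is bilinear, commutative and associative on $\mathcal{A}_r^{(t)}$, with unit $1$.

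\textbf{Step 3 (Banach algebra axioms).} Submultiplicativity follows directly:
\[
\|A\boxplus^t B\|_r^{(t)}=\|\Phi_t(A)\Phi_t(B)\|_r\le \|\Phi_t(A)\|_r\|\Phi_t(B)\|_r=\|A\|_r^{(t)}\|B\|_r^{(t)},
\]
and $\|1\|_r^{(t)}=1$. For completeness of the argument I will also recall briefly why $(\mathcal{A}_r,\cdot)$ is a Banach algebra: it is a weighted $\ell^1$ space, and submultiplicativity comes from the triangle inequality on the Cauchy product, $\sum_k\bigl|\sum_{i+j=k}a_ib_j\bigr|r^k\le \sum_i|a_i|r^i\sum_j|b_j|r^j$.

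I do not expect a genuine obstacle here. The only point that needs care is noting that $(t)_{\underline{k}}\neq0$ for every $k$, which is exactly the hypothesis $t\notin\mathbb{Z}_{\ge0}$; this is what makes $\Phi_t$ a well-defined bijection.
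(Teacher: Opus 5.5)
Your proposal is correct and follows the same approach as the paper: you show that $\Phi_t$ is an isometric algebra isomorphism from $(\mathcal{A}_r^{(t)},\boxplus^t)$ onto $(\mathcal{A}_r,\cdot)$, then transfer completeness and submultiplicativity through it. You also make explicit several points the paper leaves implicit: that $(t)_{\underline{k}}\neq 0$, that $\Phi_t$ is surjective, the coefficient check for multiplicativity, that $\mathcal{A}_r^{(t)}$ is closed under $\boxplus^t$, and that $\|1\|_r^{(t)}=1$.
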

\begin{proof}
We define an isomorphism $\Phi_t$ between $(\mathcal{A}_{r}^{(t)},\boxplus^t)$ and $(\mathcal{A}_r, \cdot)$ by
$$
\Phi_t \left( \sum_{k=0}^\infty a_k z^k\right)\coloneqq  \sum_{k=0}^\infty \frac{a_k}{(t)_{\underline{k}}}z^k.
$$
Clearly, $\Phi_t$ is isometry and
$$
\Phi_t(A\boxplus^t B) = \Phi_t(A) \Phi_t(B), \qquad A,B\in \mathcal{A}_{r}^{(t)}.
$$
Hence
\begin{align*}
\|A\boxplus^t B\|_{r}^{(t)} 
&= \|\Phi_t(A\boxplus^t B)\|_{r}=\|\Phi_t(A) \Phi_t( B)\|_{r}\\
&\le \|\Phi_t(A)\|_{r} \|\Phi_t(B)\|_{r} = \|A\|_{r}^{(t)}  \|B\|_{r}^{(t)}.
\end{align*}
Moreover, $\mathcal{A}_{r}^{(t)}$ is complete with respect to the norm $\|\cdot\|_{r}^{(t)}$, since the isometry $\Phi_t$ preserves completeness. Finally, $(\mathcal{A}_{r}^{(t)},\boxplus^t)$  is a Banach algebra with norm $\|\cdot\|_{r}^{(t)}$.
\end{proof}

\begin{remark}
If $A(z) = \sum_{k=0}^\infty a_k z^k \in \mathcal{A}_r^{(t)}$, then the series $\Phi_t(A)(z)$ converges absolutely for all $z\in \mathbb{D}_r\coloneqq \{z: |z|< r \}$.
\end{remark}

\subsection{$(t,r)$-deformed infinitesimal generators}

For $t\in\mathbb{R}\setminus \mathbb{Z}_{\ge0}$ and $r>0$, we define the \textit{$(t,r)$-deformed convolution operator} by
$$
T_B: \mathcal{A}_{r}^{(t)} \to \mathcal{A}_{r}^{(t)}, \qquad T_BA\coloneqq A\boxplus^t B.
$$
for a fixed element $B\in\mathcal{A}_{r}^{(t)}$. The case $t = d \in \mathbb{Z}_{\ge 1}$ will be discussed in Section~\ref{sec:4.3}.

Using the isometric isomorphism $\Phi_t: (\mathcal{A}_{r}^{(t)},\boxplus^t) \cong (\mathcal{A}_r,\cdot)$, we obtain
$$
\|T_B\| = \|B\|_{r}^{(t)}.
$$
Therefore, $T_B$ is a bounded operator. Moreover, we have
$$
T_A T_B = T_B  T_A = T_{A\boxplus^t B}
$$
for all $A,B \in \mathcal{A}_{r}^{(t)}$. Thus, $\{T_B: B\in \mathcal{A}_{r}^{(t)}\}$ forms a commutative semigroup.  

\begin{lemma}\label{lem:SCS}
If $\{Q_s\}_{s\ge 0} \subset  \mathcal{A}_{r}^{(t)}$ satisfies that 
\begin{itemize}
\item $Q_0=1$;
\item $Q_{s_1} \boxplus^t Q_{s_2} = Q_{s_1+s_2}$ for all $s_1,s_2\ge0$;
\item $\|Q_s- 1\|_{r}^{(t)} \to 0$ as $s\to 0^+$.
\end{itemize}
then $\{T_{Q_s}\}_{s\ge0}$ is a strongly continuous semigroup.
\end{lemma}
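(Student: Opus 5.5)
We verify the three defining properties of a strongly continuous semigroup on the Banach space $(\mathcal{A}_r^{(t)},\|\cdot\|_r^{(t)})$: $T_{Q_0}=I$, $T_{Q_{s_1}}T_{Q_{s_2}}=T_{Q_{s_1+s_2}}$, and $\|T_{Q_s}A-A\|_r^{(t)}\to 0$ as $s\to 0^+$ for every $A\in\mathcal{A}_r^{(t)}$. Each operator $T_{Q_s}$ is bounded by the norm identity $\|T_B\|=\|B\|_r^{(t)}$ established just before the lemma.

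The algebraic properties follow directly from the structure already set up. Since $1$ is the unit of $(\mathcal{A}_r^{(t)},\boxplus^t)$, we have $T_{Q_0}A=A\boxplus^t 1=A$, so $T_{Q_0}=I$. From the relation $T_AT_B=T_{A\boxplus^t B}$ and the second hypothesis, we get
\[
T_{Q_{s_1}}T_{Q_{s_2}}=T_{Q_{s_1}\boxplus^t Q_{s_2}}=T_{Q_{s_1+s_2}}.
\]

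For strong continuity, fix $A\in\mathcal{A}_r^{(t)}$. By bilinearity of $\boxplus^t$ we have $T_{Q_s}A-A=A\boxplus^t(Q_s-1)$. Submultiplicativity of the norm, which comes from the isometric isomorphism $\Phi_t$, then gives
\[
\|T_{Q_s}A-A\|_r^{(t)}\le \|A\|_r^{(t)}\,\|Q_s-1\|_r^{(t)}\longrightarrow 0 \qquad (s\to 0^+).
\]
This is in fact uniform continuity at $0$ in operator norm, because $\|T_{Q_s}-I\|=\|T_{Q_s-1}\|=\|Q_s-1\|_r^{(t)}$.

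No step is a real obstacle. The only point needing care is that strong continuity is usually required on all of $[0,\infty)$, not just at $0$. This follows from continuity at $0$ together with the semigroup law. Right continuity at $s_0$ comes from $T_{Q_{s_0+h}}-T_{Q_{s_0}}=T_{Q_{s_0}}(T_{Q_h}-I)$. Left continuity comes from $T_{Q_{s_0}}-T_{Q_{s_0-h}}=T_{Q_{s_0-h}}(T_{Q_h}-I)$, combined with the bound
\[
\|T_{Q_{s_0-h}}\|=\|Q_{s_0-h}\|_r^{(t)}\le(\|Q_{\delta}\|_r^{(t)})^{n}\ \text{ or, more simply, }\ \sup_{0\le u\le s_0}\|Q_u\|_r^{(t)}<\infty.
\]
This supremum is finite because continuity at $0$ bounds $\|Q_u\|_r^{(t)}$ on some $[0,\delta]$, and the semigroup law extends the bound to $[0,s_0]$.
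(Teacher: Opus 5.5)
Your proof is correct and follows the paper's argument: both check $T_{Q_0}=I$, obtain the semigroup law from $T_AT_B=T_{A\boxplus^t B}$, and get strong continuity from $\|T_{Q_s}A-A\|_r^{(t)}\le\|A\|_r^{(t)}\,\|Q_s-1\|_r^{(t)}$ via the isometry $\Phi_t$. Your additional remarks are correct and go beyond what the paper writes: norm continuity via $\|T_{Q_s}-I\|=\|Q_s-1\|_r^{(t)}$, and continuity at every $s_0>0$ via $\sup_{0\le u\le s_0}\|Q_u\|_r^{(t)}<\infty$ (the bare $(\|Q_\delta\|_r^{(t)})^n$ bound would need an extra factor for the remainder, but your sup argument already covers this).
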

\begin{proof}
Let us write $T_s\coloneqq T_{Q_s}$ for each $s\ge0$. Clearly, $T_0 = I_{\mathcal{A}_r^{(t)}}$. For $s_1,s_2\ge0$, we have
$$
T_{s_1} T_{s_2}=T_{Q_{s_1}\boxplus^t Q_{s_2}}= T_{Q_{s_1+s_2}}=T_{s_1+s_2}.
$$
Moreover, for all $A\in \mathcal{A}_r^{(t)}$, we get
\begin{align*}
\|T_s A- A\|_r^{(t)} &=\|\Phi_t(T_sA-A)\|_{r} =\|\Phi_t(Q_s-1) \Phi_t(A)\|_{r}\\
&\le \|\Phi_t(A)\|_{r} \|\Phi_t(Q_s-1)\|_{r} = \|A\|_r^{(t)} \|Q_s-1\|_r^{(t)}\to 0
\end{align*}
as $s\to 0^+$.
\end{proof}

We call a family $\mathfrak{Q} = \{Q_s\}_{s \ge 0}$ satisfying the conditions of Lemma~\ref{lem:SCS} a \textit{$\boxplus^t$-semigroup}. We define its infinitesimal generator as follows.

\begin{definition}\label{def:generators}
Let $\mathfrak{Q}=\{Q_s\}_{s\ge0} \subset \mathcal{A}_r^{(t)}$ be a $\boxplus^t$-semigroup. Its \textit{$(t,r)$-deformed infinitesimal generator} $\mathcal{L}_r^{(t)}[\mathfrak{Q}]$ is defined by
$$
\mathcal{L}_r^{(t)}[\mathfrak{Q}] A \coloneqq  \lim_{s\to 0^+} \frac{T_s A -A}{s}, \qquad A\in \mathcal{D}\bigl(\mathcal{L}_r^{(t)}[\mathfrak{Q}] \bigr)
$$
with a domain
$$
\text{Dom}\bigl(\mathcal{L}_r^{(t)}[\mathfrak{Q}] \bigr)\coloneqq \left\{A\in \mathcal{A}_{r}^{(t)} : \lim_{s\to0^+} \frac{T_s A- A}{s} \text{ exists}\right\}.
$$
\end{definition}

\begin{theorem}\label{thm:generators}
Let $\mathfrak{Q}=\{Q_s\}_{s\ge 0} \subset \mathcal{A}_r^{(t)}$ be a $\boxplus^t$-semigroup. Assume that there exists the following limit:
$$
\eta_\mathfrak{Q}(z)\coloneqq \lim_{s\to 0^+} \frac{\Phi_t(Q_s)(z)-1}{s}, \quad \text{for each } z\in \mathbb{D}_r.
$$
Then we have
$$
\mathcal{L}_r^{(t)}[\mathfrak{Q}] = \Phi_t^{\langle-1\rangle} \circ \mathbf{M}_{\eta_\mathfrak{Q}(z)} \circ \Phi_t
$$
and its domain is written by
$$
{\rm Dom}\big(\mathcal{L}_r^{(t)}[\mathfrak{Q}] \bigr)=\left\{ A\in \mathcal{A}_{r}^{(t)} : \eta_\mathfrak{Q}(\cdot) \Phi_t(A)\in \mathcal{A}_r\right\},
$$
where $\mathbf{M}_{c(z)}$ is the multiplication operator by $c(z)$, that is, $\bigl(\mathbf{M}_{c(z)} A\bigr)(z)\coloneqq  c(z)A(z)$.
\end{theorem}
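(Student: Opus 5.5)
The plan is to transport everything through the isometric isomorphism $\Phi_t\colon(\mathcal{A}_r^{(t)},\boxplus^t)\to(\mathcal{A}_r,\cdot)$ and reduce the statement to a fact about multiplication semigroups in the Banach algebra $\mathcal{A}_r$. Put $q_s\coloneqq\Phi_t(Q_s)\in\mathcal{A}_r$. Since $\Phi_t$ is multiplicative and isometric, the conditions of Lemma~\ref{lem:SCS} become
\[
q_0=1,\qquad q_{s_1}q_{s_2}=q_{s_1+s_2},\qquad \|q_s-1\|_r\to 0 \ (s\to0^+).
\]
Moreover,
\[
\Phi_t\!\left(\frac{T_sA-A}{s}\right)=\frac{q_s-1}{s}\,\Phi_t(A).
\]
Hence $A\in{\rm Dom}(\mathcal{L}_r^{(t)}[\mathfrak{Q}])$ if and only if $\frac{q_s-1}{s}\Phi_t(A)$ converges in $\mathcal{A}_r$, and in that case $\mathcal{L}_r^{(t)}[\mathfrak{Q}]A=\Phi_t^{\langle-1\rangle}(\lim_s \tfrac{q_s-1}{s}\Phi_t(A))$.

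The key step is to show that $\{q_s\}$ is a norm-continuous one-parameter group element in $\mathcal{A}_r$, so that $q_s=\exp(sg)$ for some $g\in\mathcal{A}_r$ and $\frac{q_s-1}{s}\to g$ in $\|\cdot\|_r$. I would use the standard uniformly-continuous-semigroup argument, carried out inside the commutative Banach algebra $\mathcal{A}_r$ rather than on operators.
\begin{enumerate}[\rm (i)]
\item First, $s\mapsto q_s$ is norm continuous on $[0,\infty)$, by the semigroup law and continuity at $0$.
\item Choose $h>0$ with $\|\frac1h\int_0^h q_u\,du-1\|_r<1$; this is possible by continuity at $0$. Then $V\coloneqq\int_0^h q_u\,du$ (a Bochner or Riemann integral in $\mathcal{A}_r$) is invertible by the Neumann series.
\item Compute
\[
\frac{q_s-1}{s}\,V=\frac1s\left(\int_h^{h+s}q_u\,du-\int_0^s q_u\,du\right)\longrightarrow q_h-1 .
\]
Therefore $\frac{q_s-1}{s}\to g\coloneqq(q_h-1)V^{-1}$ in norm.
\end{enumerate}
An alternative is to take $g=\frac1{s_0}\log q_{s_0}$ for small $s_0$ via the power series of $\log$, but the integral argument avoids checking consistency across different $s_0$.

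Next, I identify $g$ with $\eta_\mathfrak{Q}$. For $z\in\mathbb{D}_r$, evaluation $f\mapsto f(z)$ is a bounded functional on $\mathcal{A}_r$, since $|f(z)|\le\|f\|_r$. Norm convergence therefore gives $\frac{q_s(z)-1}{s}\to g(z)$, so $\eta_\mathfrak{Q}=g$ on $\mathbb{D}_r$ and in particular $\eta_\mathfrak{Q}\in\mathcal{A}_r$. (The existence assumption on $\eta_\mathfrak{Q}$ is thus in fact automatic.) Then, for every $A\in\mathcal{A}_r^{(t)}$,
\[
\left\|\frac{q_s-1}{s}\Phi_t(A)-\eta_\mathfrak{Q}\Phi_t(A)\right\|_r\le\left\|\frac{q_s-1}{s}-\eta_\mathfrak{Q}\right\|_r\|\Phi_t(A)\|_r\to0 .
\]
So the limit exists for all $A$ and equals $\Phi_t^{\langle-1\rangle}(\eta_\mathfrak{Q}\Phi_t(A))$, which is the formula $\mathcal{L}_r^{(t)}[\mathfrak{Q}]=\Phi_t^{\langle-1\rangle}\circ\mathbf{M}_{\eta_\mathfrak{Q}(z)}\circ\Phi_t$. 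The domain is all of $\mathcal{A}_r^{(t)}$. This coincides with the stated set $\{A:\eta_\mathfrak{Q}\Phi_t(A)\in\mathcal{A}_r\}$, because $\eta_\mathfrak{Q}\in\mathcal{A}_r$ and $\mathcal{A}_r$ is an algebra.

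The main obstacle is step (ii)–(iii), the passage from the norm continuity of $s\mapsto q_s$ at $0$ to norm differentiability at $0$. Everything else is bookkeeping through $\Phi_t$. I would use the integral trick above, justifying the Riemann integral in $\mathcal{A}_r$ by continuity and using commutativity to move $V$ past $q_s$.
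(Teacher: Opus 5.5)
Your proof is correct, and at the decisive step it takes a genuinely different and more complete route than the paper. Both arguments conjugate by $\Phi_t$ and reduce to the multiplicative semigroup $q_s=\Phi_t(Q_s)$ in $\mathcal{A}_r$.

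The paper then replaces $\lim_{s\to0^+}\frac{\Phi_t(Q_s)-1}{s}$ by $\mathbf{M}_{\eta_\mathfrak{Q}(z)}$ ``by assumption''. In doing so it tacitly upgrades the pointwise convergence on $\mathbb{D}_r$ to convergence in $\mathcal{A}_r$. Taken literally, together with the bound $|f(z)|\le\|f\|_r$, that argument only gives two things: the formula on the domain, and the inclusion ${\rm Dom}\bigl(\mathcal{L}_r^{(t)}[\mathfrak{Q}]\bigr)\subseteq\{A:\eta_\mathfrak{Q}\Phi_t(A)\in\mathcal{A}_r\}$. The reverse inclusion needs norm convergence of $\frac{q_s-1}{s}\Phi_t(A)$, which pointwise information alone does not give.

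You supply exactly that missing ingredient. You show that a norm-continuous multiplicative semigroup in the commutative unital Banach algebra $\mathcal{A}_r$ is norm-differentiable at $0$, via the standard uniformly-continuous-semigroup trick with the invertible element $V=\int_0^h q_u\,du$. You then identify the norm derivative $g$ with $\eta_\mathfrak{Q}$ through the bounded point evaluations.

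This buys both inclusions of the domain identity, and more besides. First, $\eta_\mathfrak{Q}\in\mathcal{A}_r$ automatically, so the hypothesis that the limit defining $\eta_\mathfrak{Q}$ exists is redundant. Second, the generator is the bounded operator $A\mapsto A\boxplus^t\Phi_t^{\langle-1\rangle}(g)$. Third, the domain is all of $\mathcal{A}_r^{(t)}$, so the stated domain description is correct but vacuous. This is consistent with the paper's examples: $-z^2/(2t)$, $t\log(1-z/t)$ for $r<|t|$, and $\eta_{\gamma,a,\nu}$ under the exponential-moment condition all lie in $\mathcal{A}_r$.

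The paper's version is shorter and keeps the unbounded-generator phrasing familiar from L\'evy processes. Yours is the argument that actually establishes the stated domain equality.
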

\begin{proof}
Since $\Phi_t$ is continuous, we get
\begin{align*}
\bigl(\Phi_t \circ \mathcal{L}_r^{(t)}[\mathfrak{Q}] \bigr)A = \lim_{s\to 0^+} \frac{\Phi_t(T_sA)-\Phi_t(A)}{s} = \lim_{s\to 0^+} \frac{\Phi_t(Q_s)-1}{s} \cdot \Phi_t(A).
\end{align*}
By assumption, we have
$$
\lim_{s\to 0^+} \frac{\Phi_t(Q_s)-1}{s} = \mathbf{M}_{\eta_{\mathfrak{Q}}(z)},
$$
which completes the proof.
\end{proof}

The above theorem shows that the $(t,r)$-deformed infinitesimal generator $\mathcal{L}_r^{(t)}[\mathfrak{Q}]$ is conjugate to the multiplication operator by $\eta_{\mathfrak{Q}}(z)$ via the transform $\Phi_t$. Thus, it is easy to compute the $(t,r)$-deformed infinitesimal generators of $\boxplus^t$-semigroups as follows.

\begin{example}
For $t\in \mathbb{R}\setminus \mathbb{Z}_{\ge0}$, we define
$$
H_s^{(t)}\coloneqq D_{\sqrt{s}}\bigl(H^{(t)}\bigr), \qquad s>0
$$ 
and set $H_0^{(t)} \coloneqq  1$, where $H^{(t)}$ denotes the Hermite series introduced in Example \ref{ex:Hermite}. It is easy to verify that $H_s^{(t)} \in \mathcal{A}_r^{(t)}$ for all $s\ge0$ and $r>0$. By Corollary \ref{cor:recursive}, one has $H_{s_1}^{(t)}\boxplus^t H_{s_2}^{(t)}=H_{s_1+s_2}^{(t)}$. By Lemma \ref{lem:SCS}, $\mathfrak{H}\coloneqq \{H_s^{(t)}\}_{s\ge0}$ is a $\boxplus^t$-semigroup. Moreover we obtain
\begin{align*}
\Phi_t\bigl(H_s^{(t)}\bigr)(z) = \sum_{k=0}^\infty \frac{1}{k!} \left(-\frac{s}{2t}\right)^k z^{2k} = \exp\left(-\frac{s}{2t}z^2\right)
\end{align*}
and therefore
$$
\eta_{\mathfrak{H}}(z)=\lim_{s\to 0}\frac{e^{-\frac{s}{2t}z^2}-1}{s} = -\frac{z^2}{2t}, \qquad z\in \mathbb{C}.
$$ 
By Theorem \ref{thm:generators}, the $(t,r)$-deformed infinitesimal generator of the semigroup $\mathfrak{H}$ is computed by
\begin{align*}
\bigl(\mathcal{L}_r^{(t)}[\mathfrak{H}] A\bigr)(z) 
&=\bigl(\Phi_t^{\langle -1\rangle} \circ M_{-\frac{z^2}{2t}} \circ \Phi_t (A)\bigr)(z)\\
&=\Phi_t^{\langle -1\rangle}\left(-\frac{1}{2t} \sum_{k=0}^\infty \frac{a_k}{(t)_{\underline{k}}} z^{k+2} \right)\\
&= -\frac{1}{2t} \sum_{k=0}^\infty \frac{(t)_{\underline{k+2}}}{(t)_{\underline{k}}} a_kz^{k+2}\\
&=-\frac{1}{2t} \sum_{k=0}^\infty (t-k)(t-k-1)a_k z^{k+2} .
\end{align*}
\end{example}

\begin{example}
Let $L_s^{(t)}$ be the $t$-deformed Laguerre series for $t \in \mathbb{R} \setminus \mathbb{Z}_{\ge 0}$ and $s > 0$ such that $st\in \mathbb{R}_{>0}\setminus \mathbb{Z}_{\ge1}$; see Example~\ref{ex:Laguerre}. Since 
$$
\sum_{k=0}^\infty\left|\frac{1}{(t)_{\underline{k}}}(-1)^k \frac{(st)_{\underline{k}}(t)_{\underline{k}}}{t^k k!}\right|r^k=\sum_{k=0}^\infty \left|\frac{(st)_{\underline{k}}}{t^k k!}\right|r^k 
$$
this converges if and only if $0<r<|t|$. Therefore $L_s^{(t)}\in \mathcal{A}_r^{(t)}$ if $0<r<|t|$.  Combining this with \eqref{eq:Lag_convolution} and Lemma \ref{lem:SCS}, the family $\mathfrak{L}=\{L_s^{(t)}\}_{s\ge0}$ forms a $\boxplus^t$-semigroup. Note that,
$$
\Phi_t\bigl(L_s^{(t)}\bigr)(z) = \sum_{k=0}^\infty(-1)^k \frac{(st)_{\underline{k}}}{t^k k!}z^k=\left(1-\frac{z}{t}\right)^{st}
$$
and therefore
$$
\eta_{\mathfrak{L}}(z)=\lim_{s\to 0} \frac{(1-z/t)^{st}-1}{s} = t \log \left(1-\frac{z}{t}\right), \quad z\in \mathbb{D}_{|t|}.
$$
Finally, by Theorem \ref{thm:generators}, we have
\begin{align*}
\bigl(\mathcal{L}_r^{(t)}[\mathfrak{L}]A\bigr)(z) &= \bigl(\Phi_t^{\langle-1\rangle} \circ \mathbf{M}_{t \log (1+z/t)} \circ \Phi_t (A)\bigr)(z)\\
&=\Phi_t^{\langle-1\rangle} \left( t \log\left(1-\frac{z}{t}\right) \sum_{k=0}^\infty \frac{a_k}{(t)_{\underline{k}}} z^k\right)\\
&=\Phi_t^{\langle-1\rangle} \left( \sum_{k=1}^\infty \left( \sum_{\ell =1}^k \frac{1}{\ell t^{\ell-1}} \frac{a_{k-\ell}}{(t)_{\underline{k-\ell}}}\right)z^k \right)\\
&= \sum_{k=1}^\infty \left( \sum_{\ell =1}^k \frac{ (t-k+\ell) \cdots (t-k+1) }{\ell t^{\ell-1}}   a_{k-\ell} \right)z^k.
\end{align*}
\end{example}

\subsection{$d$-deformed infinitesimal generators and finite free probability}\label{sec:4.3}

In finite free probability theory, the following convolution operator
\[
T_g: \mathbb{R}[x]_d \to \mathbb{R}[x]_d, \qquad f \mapsto f \boxplus_d g,
\]
was introduced and studied in connection with the differential operator which preserves real-rootedness; see \cite[Proposition 1.2]{leake2018further}. A family $\mathfrak{q} = \{q_s\}_{s \ge 0}$ in $\mathbb{R}[x]_d$ is called a $\boxplus_d$-semigroup if $q_0(x) = x^d$ and $q_{s_1} \boxplus_d q_{s_2} = q_{s_1 + s_2}$ for all $s_1, s_2 \ge 0$. For such a family $\mathfrak{q}$, we define the operator
\[
\mathcal{L}[\mathfrak{q}] f \coloneqq \lim_{s \to 0^+} \frac{T_{q_s} f - f}{s}, \qquad f \in \mathbb{R}[x]_d,
\]
whenever the limit exists. We call $\mathcal{L}[\mathfrak{q}]$ the \textit{finite free infinitesimal generator}.
The purpose of this section is to provide an explicit formula for this operator. To do this, we interpret the finite free setting in the framework of $(t,r)$-deformed infinitesimal generators in the case $t = d \in \mathbb{Z}_{\ge 1}$. 
Rather than developing a separate theory, we treat this case as a limiting instance of the general $(t,r)$-deformed framework as $t \to d$.

Note that $(\mathcal{A}_r^{(t)}, \boxplus^t) = (\operatorname{Im}\iota_d, \boxplus^d)$ for all $r > 0$ when $t = d$. 
In this case, the resulting structures are independent of $r$, and we therefore omit the parameter $r$ from the notation.
Under this identification, the $d$-deformed convolution operators and $\boxplus^d$-semigroups are defined analogously. 
Accordingly, for a $\boxplus^d$-semigroup $\mathfrak{Q} = \{Q_s\}_{s \ge 0} \subset \operatorname{Im}\iota_d$, we define the $d$-deformed infinitesimal generator by
\[
\mathcal{L}^{(d)}[\mathfrak{Q}] A 
\coloneqq \lim_{s \to 0^+} \frac{T_{Q_s} A - A}{s},
\]
whenever the limit exists. 

Finally, we obtain a formula for the finite free infinitesimal generator as follows.
\begin{theorem}
Let $\mathfrak{q} = \{q_s\}_{s \ge 0} \subset \mathbb{R}[x]_d$ be a $\boxplus_d$-semigroup. 
Then the finite free infinitesimal generator of $\mathfrak{q}$ satisfies
\begin{align*}
\iota_d\bigl(\mathcal{L}[\mathfrak{q}] f\bigr)
= \mathcal{L}^{(d)}\bigl[\iota_d(\mathfrak{q})\bigr]\bigl(\iota_d(f)\bigr) 
\qquad f \in \mathbb{R}[x]_d,
\end{align*}
where $\iota_d(\mathfrak{q}) \coloneqq \{\iota_d(q_s)\}_{s\ge0} \subset \operatorname{Im}\iota_{d}$ is a $\boxplus^d$-semigroup.
\end{theorem}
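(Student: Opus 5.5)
The plan is to reduce everything to two facts: $\iota_d$ is a linear isomorphism $\mathbb{R}[x]_d \to \operatorname{Im}\iota_d$ between finite-dimensional spaces, and it intertwines the two convolutions by \eqref{free-t-convolution}. First we check that $\iota_d(\mathfrak{q})$ is a $\boxplus^d$-semigroup. Since $q_0(x)=x^d$, we get $\iota_d(q_0)(z)=z^d\cdot z^{-d}=1$, the unit for $\boxplus^d$. By \eqref{free-t-convolution},
\begin{align*}
\iota_d(q_{s_1})\boxplus^d\iota_d(q_{s_2})=\iota_d(q_{s_1}\boxplus_d q_{s_2})=\iota_d(q_{s_1+s_2}).
\end{align*}
The continuity requirement at $s=0$ is interpreted on $\operatorname{Im}\iota_d$, which is finite-dimensional, so all norms are equivalent and it reduces to coefficientwise convergence. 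This matches whatever continuity is implicit in the existence of $\mathcal{L}[\mathfrak{q}]$. If the definition of a $\boxplus^d$-semigroup requires it explicitly, we note that $q_s \to x^d$ coefficientwise is equivalent to $\iota_d(q_s)\to 1$.

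Next we compare the difference quotients. For $f\in\mathbb{R}[x]_d$ and $s>0$, linearity of $\iota_d$ together with \eqref{free-t-convolution} gives
\begin{align*}
\iota_d\!\left(\frac{T_{q_s}f-f}{s}\right)=\frac{\iota_d(f)\boxplus^d\iota_d(q_s)-\iota_d(f)}{s}=\frac{T_{\iota_d(q_s)}\iota_d(f)-\iota_d(f)}{s}.
\end{align*}
Since $\iota_d$ is a linear bijection between finite-dimensional real vector spaces, it is a homeomorphism, and so is its inverse on $\operatorname{Im}\iota_d$. Hence the limit on the left exists as $s\to0^+$ if and only if the limit on the right does. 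When they exist, applying $\iota_d$ to the limit equals the limit of the images. This yields $\iota_d(\mathcal{L}[\mathfrak{q}]f)=\mathcal{L}^{(d)}[\iota_d(\mathfrak{q})](\iota_d(f))$, and the two domains correspond under $\iota_d$.

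The only delicate point is bookkeeping about topologies. The $d$-deformed generator was introduced as a limiting instance of the $(t,r)$ framework, with norms $\|\cdot\|_r^{(t)}$ that are ill-defined at $t=d$ because $(d)_{\underline{k}}=0$ for $k>d$. I would settle this by fixing the topology on $\operatorname{Im}\iota_d$ to be any norm topology, for instance $\sum_{k\le d}|a_k|/(d)_{\underline{k}}\,r^k$. All of these are equivalent and agree with the coefficientwise topology on $\mathbb{R}[x]_d$, so the argument above does not depend on the choice. No analytic estimate is needed beyond this.
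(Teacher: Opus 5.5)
Your proposal is correct and follows the paper's proof: you push the difference quotient through $\iota_d$ using linearity and \eqref{free-t-convolution}, then let $s\to 0^+$. Your extra checks fill in details the paper leaves implicit, namely that $\iota_d(\mathfrak{q})$ is a $\boxplus^d$-semigroup, and that $\iota_d$ is a homeomorphism between finite-dimensional spaces, so limits and domains correspond exactly.
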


\begin{proof}
A direct computation shows that
\begin{align*}
\iota_d\left( \frac{T_{q_s}f-f}{s}\right)
&= \frac{\iota_d(T_{q_s}f)-\iota_d(f)}{s}\\
&=\frac{\iota_d(f\boxplus_d q_s)-\iota_d(f)}{s}\\
&=\frac{\iota_d(f)\boxplus^d \iota_d(q_s)-\iota_d(f)}{s} \qquad \text{(by \eqref{free-t-convolution})}\\
&=\frac{T_{\iota_d(q_s)}\bigl(\iota_d(f)\bigr) - \iota_d(f)}{s}.
\end{align*}
Taking the limit as $s \to 0^+$, we obtain the desired formula.
\end{proof}

\begin{example}
If $q_s(x) = D_{\sqrt{s}} H_d(x)$, then
\begin{align*}
\mathcal{L}^{(d)}\bigl[\iota_d(\mathfrak{q})\bigr]\bigl(\iota_d(f)\bigr)(z)
= -\frac{1}{2d} \sum_{k=0}^{d-2} (d-k)(d-k-1) f_k z^{k+2}.
\end{align*}

One can verify that the family $\{q_s\}_{s \ge 0}$ satisfies the following partial differential equation, known as the heat equation:
\[
\partial_s q_s(x) = -\frac{1}{2d} \partial_{xx} q_s(x).
\]
Equivalently, in a formal sense, we may write
\[
q_s(x) = \exp\left(-\frac{s}{2d} \partial_{xx}\right) x^d.
\]

Moreover, one can directly verify that
\[
\iota_d\!\left( -\frac{1}{2d} \partial_{xx} f \right)(z)
= -\frac{1}{2d} \sum_{k=0}^{d-2} (d-k)(d-k-1) f_k z^{k+2}.
\]

Thus, the finite free infinitesimal generator corresponds, via the embedding $\iota_d$, to the second-order differential operator $-\frac{1}{2d}\partial_{xx}$. 
In particular, this computation shows that the $d$-deformed infinitesimal generator provides an algebraic realization of the finite free heat equation.
\end{example}

\subsection{$(-1,r)$-deformed infinitesimal generators and L\'{e}vy processes}

Let $\{X(s)\}_{s \ge 0}$ be a one-dimensional L\'{e}vy process. 
Assume that the moment generating function $\Psi_{X(1)}(z)=\mathbb{E}\bigl[e^{zX(1)}\bigr]$ is finite in a neighborhood of $z=0$. 
Then it is known that, for sufficiently small $z$, 
\begin{align}\label{eq:Levy-Khintchine}
\Psi_{X(s)}(z) = \exp\bigl(s \eta_{\gamma,a,\nu}(-z) \bigr),
\end{align}
where $\eta_{\gamma,a,\nu}$ is given by
\[
\eta_{\gamma,a,\nu}(z) = -\gamma z + \frac{a}{2}z^2 + \int_{\mathbb{R}}\left(e^{-zx}-1+zx\mathbf{1}_{|x|\le 1}\right)\, \nu(dx),
\]
for $\gamma \in \mathbb{R}$, $a \ge 0$, and a L\'{e}vy measure $\nu$, that is, a positive measure on $\mathbb{R}$ satisfying $\nu(\{0\})=0$ and $\int_{\mathbb{R}} (1 \land x^2)\, \nu(dx) < \infty$. 
The formula \eqref{eq:Levy-Khintchine} is called the \textit{L\'{e}vy--Khintchine representation}\footnote{It is standard that the characteristic function $\varphi_X: \mathbb{R}\ni z\mapsto \mathbb{E}[e^{{\rm i}zX(s)}]$ admits the L\'{e}vy--Khintchine representation $\exp(s \eta_{\gamma,a,\nu}(-{\rm i}z))$. Assume that $\Psi_{X(1)}(z)$ is finite in a neighborhood of $z=0$. Then $\varphi_X$ extends to an analytic function in a neighborhood of the origin in $\mathbb{C}$, and we have $\Psi_{X(s)}(z)=\varphi_{X(s)} (-{\rm i}z) $ for sufficiently small $z$. Hence $\eqref{eq:Levy-Khintchine}$ holds true.}. 
Furthermore, the triplet $(\gamma,a,\nu)$ is uniquely determined by $\{X(s)\}_{s \ge 0}$ and is called the \textit{characteristic triplet}; see \cite{ken1999levy} for details.

\begin{lemma}\label{lem:r-condition}
Let $\{X(s)\}_{s\ge0}$ be a one-dimensional L\'{e}vy process with the characteristic triplet $(\gamma,a,\nu)$. For $r>0$, the following conditions are equivalent.
\begin{enumerate}[\rm (1)]
\item For all $s\ge0$, $\displaystyle \sum_{k=0}^\infty \left| \frac{\mathbb{E}[X(s)^k]}{k!}\right|r^k < \infty$.
\item $ \displaystyle \int_{|x|>1} e^{r|x|}\nu(dx) <\infty$.
\end{enumerate}
\end{lemma}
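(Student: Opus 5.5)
The plan is to reduce condition (1) to a statement about exponential moments and then apply the standard criterion for exponential moments of L\'{e}vy processes (Sato \cite{ken1999levy}, Theorem 25.3 with the submultiplicative function $g(x)=e^{r|x|}$). That criterion says $\mathbb{E}[e^{r|X(s)|}]<\infty$ for some (equivalently all) $s>0$ if and only if $\int_{|x|>1}e^{r|x|}\nu(dx)<\infty$. So the argument comes down to showing that (1) is equivalent to $\mathbb{E}[e^{r|X(s)|}]<\infty$ for all $s\ge0$.

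\textbf{Step 1: (2) implies (1).} Assume (2). Sato's theorem gives $\mathbb{E}[e^{r|X(s)|}]<\infty$ for every $s\ge0$; the case $s=0$ is trivial since $X(0)=0$. Then
\[
\sum_{k}\frac{|\mathbb{E}[X(s)^k]|}{k!}r^k\le \sum_k \frac{\mathbb{E}[|X(s)|^k]}{k!}r^k=\mathbb{E}[e^{r|X(s)|}]<\infty,
\]
where the equality uses monotone convergence. This gives (1).

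\textbf{Step 2: (1) implies (2).} This direction is the main obstacle, because (1) only controls $|\mathbb{E}[X^k]|$ and not $\mathbb{E}[|X|^k]$. For even $k$ the two agree, so (1) yields
\[
\sum_m \frac{\mathbb{E}[X(s)^{2m}]}{(2m)!}r^{2m}<\infty .
\]
The left-hand side equals $\mathbb{E}[\cosh(rX(s))]$ by monotone convergence, since every term is nonnegative. Because $e^{r|x|}\le 2\cosh(rx)$, we get $\mathbb{E}[e^{r|X(s)|}]<\infty$. Applying Sato's criterion at any single $s>0$ (say $s=1$) then gives (2).

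Both directions rest on the cited exponential-moment theorem, which I would invoke as a known result rather than reprove. If a self-contained argument is preferred, the route from (2) to (1) would go through the L\'{e}vy--It\^o decomposition. One splits $X(s)$ into a compound Poisson part with jumps of size $>1$ and an independent part whose L\'{e}vy measure is supported in $[-1,1]$; the latter has all exponential moments. The compound Poisson part satisfies $\mathbb{E}[e^{r|Y|}]\le \exp\bigl(s\int_{|x|>1}(e^{r|x|}-1)\,\nu(dx)\bigr)$. Independence and $e^{r|a+b|}\le e^{r|a|}e^{r|b|}$ then finish the argument. The converse direction would reduce to the same decomposition: finiteness of $\mathbb{E}[e^{r|X|}]$ forces finiteness of $\mathbb{E}[e^{r|Y|}]$ for the jump part, and for a compound Poisson variable this is equivalent to $\int_{|x|>1}e^{r|x|}\nu(dx)<\infty$.
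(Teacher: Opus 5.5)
Your proof is correct. For (2)$\Rightarrow$(1) it is the same as the paper's argument. Both obtain $\mathbb{E}[e^{r|X(s)|}]<\infty$ from integrability of the large jumps: the paper asserts $\mathbb{E}[e^{\pm rX(s)}]<\infty$ and uses $e^{r|x|}\le e^{rx}+e^{-rx}$, while you cite Sato's Theorem 25.3 for the submultiplicative $g(x)=e^{r|x|}$. Both then dominate the series by $\sum_k \mathbb{E}[|X(s)|^k]r^k/k!=\mathbb{E}[e^{r|X(s)|}]$.

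For (1)$\Rightarrow$(2) the routes differ. The paper equates the series in (1) with $\mathbb{E}[e^{rX(s)}]=\exp\bigl(s\eta_{\gamma,a,\nu}(-r)\bigr)$. It then splits the exponent into the drift/Gaussian terms, the finite small-jump integral $I(r)$, and $\int_{|x|>1}(e^{rx}-1)\,\nu(dx)$, and concludes that this last integral must be finite. You instead keep only the even-order terms, where $|\mathbb{E}[X(s)^{2m}]|=\mathbb{E}[|X(s)|^{2m}]$. Monotone convergence gives $\mathbb{E}[\cosh(rX(s))]<\infty$, hence $\mathbb{E}[e^{r|X(s)|}]\le 2\,\mathbb{E}[\cosh(rX(s))]<\infty$, and one application of the criterion finishes.

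Your version is the more careful one, for two reasons. First, the paper's equality $\mathbb{E}[e^{rX(s)}]=\sum_k \mathbb{E}[X(s)^k]r^k/k!$ requires exchanging expectation and summation. Condition (1) does not directly license this, since it bounds $|\mathbb{E}[X^k]|$ rather than $\mathbb{E}[|X|^k]$; your $\cosh$ step is exactly what justifies the exchange. Second, working with $e^{rx}$ only controls the positive tail $\int_{x>1}e^{rx}\,\nu(dx)$, so the negative tail needs a second pass at $-r$. Your two-sided $g(x)=e^{r|x|}$ handles both tails at once.

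What the paper's computation offers in return is an explicit link between the admissible radius $r$ and the large-jump part of $\eta_{\gamma,a,\nu}$. That is the exponent which reappears in the generator theorem that follows.
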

\begin{proof}
(1) $\Rightarrow$ (2): By the assumption, for such an $r>0$, the expectation
$$
\exp\bigl(s \eta_{\gamma,a,\nu}(-r) \bigr)=\mathbb{E}\bigl[e^{rX(s)}\bigr]= \sum_{k=0}^\infty \frac{\mathbb{E}[X(s)^k]}{k!}r^k
$$
converges. Since $\nu$ is a L\'{e}vy measure, we conclude that, for all $r>0$,
$$
I(r)\coloneqq  \int_{|x|\le 1} (e^{rx} -1 -rx ) \nu(dx) = \int_{|x|\le 1} \left( \frac{r^2}{2} x^2 + O(x^3)\right)\nu (dx) < \infty.
$$
A direct computation shows that
\begin{align*}
\exp\bigl(s \eta_{\gamma,a,\nu}(-r)\bigr)= e^{s\gamma r + \frac{sa}{2}r^2} e^{sI(r)}\exp\left(s\int_{|x|> 1}\left(e^{rx}-1\right) \nu(dx) \right).
\end{align*}
For the above exponential to converge, it is necessary that
$$
\int_{|x|>1} e^{r|x|} \nu(dx) < \infty.
$$

(2) $\Rightarrow$ (1): By assumption of the L\'{e}vy measure $\nu$, we get $\mathbb{E}[e^{rX(s)}]<\infty$ and $\mathbb{E}[e^{-rX(s)}]<\infty$. This implies that
$$
\mathbb{E}\bigl[e^{r|X(s)|}\bigr] \le \mathbb{E}\bigl[e^{rX(s)}\bigr] + \mathbb{E}\bigl[e^{-rX(s)}\bigr]<\infty.
$$
By the Schwartz inequality and Fubini's theorem, we get
\begin{align*}
\sum_{k=0}^\infty \left| \frac{\mathbb{E}[X(s)^k]}{k!}\right|r^k 
\le \sum_{k=0}^\infty \frac{\mathbb{E}[|X(s)|^k]}{k!}r^k 
= \mathbb{E}\left[ \sum_{k=0}^\infty\frac{|X(s)|^k}{k!}r^k\right] 
= \mathbb{E}\bigl[e^{r|X(s)|}\bigr]<\infty.
\end{align*}
\end{proof}

By \eqref{eq:semigroup-Levy} and Lemma \ref{lem:r-condition}, we have $M_{X(s)}\in \mathcal{A}_r^{(-1)}$ for $s\ge0$ and $r>0$ satisfying condition $(2)$ in Lemma \ref{lem:r-condition}. By Theorem \ref{Classicalresult}, we have
\begin{align}\label{eq:semigroup-Levy}
M_{X(s_1+s_2)} = M_{X(s_1)+X(s_2)} = M_{X(s_1)}\boxplus^{-1} M_{X(s_2)},
\end{align}
where the first equality follows from the independence of increments. Therefore, $\mathfrak{M}\coloneqq \{M_{X(s)}\}_{s\ge0}$ is a $\boxplus^{-1}$-semigroup by Lemma \ref{lem:SCS}. We can obtain the $(-1,r)$-deformed infinitesimal generator of $\mathfrak{M}$.

\begin{theorem}\label{thm:generator_Levy_process}
Let $\{X(s)\}_{s\ge0}$ be a one-dimensional L\'{e}vy process with the characteristic triplet $(\gamma,a,\nu)$ and $r>0$ such that $\int_{|x|>1} e^{r|x|}\nu(dx) <\infty$. Then
$$
 \mathcal{L}_r^{(-1)}[\mathfrak{M}]= \Phi_{-1}^{\langle-1\rangle} \circ \mathbf{M}_{\eta_{\gamma, a,\nu}(z)} \circ \Phi_{-1},
$$
with a domain 
$$
{\rm Dom}\bigl(\mathcal{L}_r^{(-1)}[\mathfrak{M}]\bigr)=\left\{A\in\mathcal{A}_r^{(-1)}: \eta_{\gamma,a,\nu}(\cdot) \Phi_{-1}(A) \in \mathcal{A}_r\right\}.
$$
\end{theorem}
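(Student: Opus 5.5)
The plan is to reduce everything to Theorem~\ref{thm:generators}. That theorem gives both the conjugation formula and the description of the domain once two things are known: that $\mathfrak{M}=\{M_{X(s)}\}_{s\ge0}$ is a $\boxplus^{-1}$-semigroup in $\mathcal{A}_r^{(-1)}$, and that the pointwise limit $\eta_{\mathfrak{M}}(z)=\lim_{s\to0^+}(\Phi_{-1}(M_{X(s)})(z)-1)/s$ exists on $\mathbb{D}_r$ and equals $\eta_{\gamma,a,\nu}(z)$. The semigroup property was established just before the statement. It follows from Lemma~\ref{lem:r-condition}, from \eqref{eq:semigroup-Levy} via Theorem~\ref{Classicalresult}(1), and from Lemma~\ref{lem:SCS}. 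So the remaining work is to identify $\Phi_{-1}(M_{X(s)})$ and compute $\eta_{\mathfrak{M}}$.

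\textbf{Step 1: identify $\Phi_{-1}(M_{X(s)})$.} Since $(-1)_{\underline{k}}=(-1)^k k!$, we have
\[
\Phi_{-1}(M_{X(s)})(z)=\sum_{k\ge0}\frac{\mathbb{E}[X(s)^k]}{k!}(-z)^k=\Psi_{X(s)}(-z)=\mathbb{E}\bigl[e^{-zX(s)}\bigr].
\]
By Lemma~\ref{lem:r-condition} and the estimate in its proof, $\mathbb{E}[e^{r|X(s)|}]<\infty$. Hence the series converges absolutely on $\mathbb{D}_r$ and defines an analytic function there. The Lévy--Khintchine representation \eqref{eq:Levy-Khintchine} then gives $\Phi_{-1}(M_{X(s)})(z)=\exp\bigl(s\,\eta_{\gamma,a,\nu}(z)\bigr)$.

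\textbf{Step 2: compute the limit.} With this formula, $(e^{s\eta(z)}-1)/s\to\eta_{\gamma,a,\nu}(z)$ for each fixed $z\in\mathbb{D}_r$, so $\eta_{\mathfrak{M}}=\eta_{\gamma,a,\nu}$. Applying Theorem~\ref{thm:generators} with this $\eta$ gives both the operator identity and the stated domain.

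\textbf{Main obstacle.} The delicate point is extending \eqref{eq:Levy-Khintchine} from "sufficiently small $z$" to all of $\mathbb{D}_r$, including complex $z$. I would handle this by analytic continuation. First, the integral defining $\eta_{\gamma,a,\nu}(z)$ converges and is analytic for $|z|<r$:
\begin{itemize}
\item on $|x|\le1$, the integrand is $O(|z|^2x^2e^{r})$ uniformly in $|z|<r$;
\item on $|x|>1$, the integrand is bounded by $e^{r|x|}+1$, which is $\nu$-integrable by hypothesis.
\end{itemize}
Analyticity then follows by dominated convergence and Morera's theorem. Both sides of \eqref{eq:Levy-Khintchine} are therefore analytic on the connected disk $\mathbb{D}_r$ and agree near $0$, so they agree everywhere on $\mathbb{D}_r$.

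Finally, the pointwise limit in Step 2 is exactly the hypothesis required by Theorem~\ref{thm:generators}. No further uniformity is needed, because that theorem's proof only uses continuity of $\Phi_{-1}$ together with this pointwise limit.
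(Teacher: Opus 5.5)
Your proposal is correct and is essentially the paper's proof: you identify $\Phi_{-1}(M_{X(s)})(z)=\Psi_{X(s)}(-z)=\exp\bigl(s\,\eta_{\gamma,a,\nu}(z)\bigr)$ through the L\'{e}vy--Khintchine representation, take the limit of the difference quotient as $s\to0^+$, and invoke Theorem~\ref{thm:generators}. Your analytic-continuation argument justifies extending \eqref{eq:Levy-Khintchine} from small $z$ to all of $\mathbb{D}_r$, a step the paper uses without comment; your remark that no uniformity is needed is also harmless here, since $\sum_{k\ge2}\frac{r^k}{k!}\int|x|^k\,\nu(dx)=\int(e^{r|x|}-1-r|x|)\,\nu(dx)<\infty$ gives $\eta_{\gamma,a,\nu}\in\mathcal{A}_r$, so the difference quotients in fact converge in norm.
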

\begin{proof}
A direct computation shows that
\begin{align*}
\Phi_{-1}\bigl(M_{X(s)}\bigr)(z)= \sum_{k=0}^\infty \frac{\mathbb{E}[X(s)^k]}{(-1)_{\underline{k}}}z^k= \sum_{k=0}^\infty (-1)^k \frac{\mathbb{E}[X_s^k]}{k!}z^k = \Psi_{-X(s)}(z).
\end{align*}
The L\'{e}vy-Khintchine representation \eqref{eq:Levy-Khintchine} implies that
$$
\Phi_{-1}\bigl(M_{X(s)}\bigr) (z)= \exp\bigl(s \eta_{\gamma,a,\nu}(z)\bigr).
$$
Since
$$
\eta_{\mathfrak{M}}(z)=\lim_{s\to 0}\frac{\exp\bigl(s \eta_{\gamma,a,\nu}(z)\bigr)-1}{s} = \eta_{\gamma,a,\nu}(z), \quad z\in \mathbb{D}_r,
$$
we obtain the desired formula by Theorem \ref{thm:generators}.
\end{proof}

The $(-1,r)$-infinitesimal generator gives rise to an evolution equation that can be interpreted as the Kolmogorov forward (or Fokker--Planck) equation for L\'{e}vy processes.

\begin{proposition}\label{cor:f-p}
Let $\{X(s)\}_{s\ge0}$ be a one-dimensional L\'{e}vy process with the characteristic triplet $(\gamma,a,\nu)$ and $r>0$ such that $\int_{|x|>1} e^{r|x|}\nu(dx) <\infty$. For $A\in \mathcal{A}_r^{(-1)}$, we define $F(s,z):=A\boxplus^{-1} M_{X(s)}$. Then $F$ satisfies the $(-1,r)$-deformed Kolmogorov forward equation:
$$
\frac{\partial}{\partial s} \Phi_{-1} \bigl(F(s,\cdot)\bigr)(z) = \eta_{\gamma, a, \nu}(z) \Phi_{-1}\bigl(F(s,\cdot)\bigr)(z), \quad \text{on} \quad \mathbb{R}_{>0} \times \mathbb{D}_r.
$$
\end{proposition}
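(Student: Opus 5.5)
The plan is to move the whole problem through the isometric isomorphism $\Phi_{-1}$. There the $\boxplus^{-1}$-convolution becomes the ordinary pointwise product, and the $s$-dependence can be written down explicitly. Concretely, since $\Phi_{-1}(A\boxplus^{-1}B)=\Phi_{-1}(A)\Phi_{-1}(B)$, we get
\[
\Phi_{-1}\bigl(F(s,\cdot)\bigr)(z)=\Phi_{-1}(A)(z)\,\Phi_{-1}\bigl(M_{X(s)}\bigr)(z).
\]
This uses that $M_{X(s)}\in\mathcal{A}_r^{(-1)}$, which holds by Lemma \ref{lem:r-condition} under the hypothesis $\int_{|x|>1}e^{r|x|}\nu(dx)<\infty$. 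As computed in the proof of Theorem \ref{thm:generator_Levy_process}, we have $\Phi_{-1}(M_{X(s)})(z)=\Psi_{-X(s)}(z)=\exp\bigl(s\,\eta_{\gamma,a,\nu}(z)\bigr)$. It follows that
\[
\Phi_{-1}\bigl(F(s,\cdot)\bigr)(z)=\Phi_{-1}(A)(z)\,e^{s\eta_{\gamma,a,\nu}(z)}, \qquad z\in\mathbb{D}_r .
\]
Differentiating in $s$ at fixed $z$ then gives the stated equation immediately, because $\Phi_{-1}(A)(z)$ does not depend on $s$.

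The main point to check is that the Lévy--Khintchine identity \eqref{eq:Levy-Khintchine} really holds on all of $\mathbb{D}_r$, and not just for small $z$ as stated after \eqref{eq:Levy-Khintchine}. I would argue by analytic continuation. By Lemma \ref{lem:r-condition}, $\mathbb{E}[e^{r|X(s)|}]<\infty$, so $z\mapsto\mathbb{E}[e^{-zX(s)}]$ is holomorphic on $\mathbb{D}_r$ and equals $\Psi_{-X(s)}(z)$ via the absolutely convergent moment series. On the other side, $\eta_{\gamma,a,\nu}(z)$ is holomorphic on $\mathbb{D}_r$. To see this, split the integral into $|x|\le1$, where $|e^{-zx}-1+zx|\le C x^2$ uniformly for $|z|<r$, and $|x|>1$, where $|e^{-zx}-1|\le e^{r|x|}+1$, which is $\nu$-integrable by hypothesis. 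Holomorphy of the parameter integral then follows from dominated convergence together with Morera's theorem. The two holomorphic functions agree near $0$, so by the identity theorem they agree on the connected disc $\mathbb{D}_r$.

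Once this extension is established, the derivative can be computed directly:
\[
\frac{\partial}{\partial s}\Bigl(\Phi_{-1}(A)(z)e^{s\eta_{\gamma,a,\nu}(z)}\Bigr)=\eta_{\gamma,a,\nu}(z)\,\Phi_{-1}(A)(z)e^{s\eta_{\gamma,a,\nu}(z)},
\]
and the right-hand side is exactly $\eta_{\gamma,a,\nu}(z)\Phi_{-1}(F(s,\cdot))(z)$. Alternatively, one can phrase this through the semigroup: $T_{s+h}A-T_sA=T_s(T_hA-A)$, and Theorem \ref{thm:generator_Levy_process} identifies the limit pointwise. The explicit product formula avoids any domain issues for the generator, however, since only a pointwise derivative in $s$ is claimed.
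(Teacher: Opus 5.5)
Your proposal is correct and follows the paper's proof. You use $\Phi_{-1}$ to turn the convolution into the pointwise product $\Phi_{-1}(A)(z)\,e^{s\eta_{\gamma,a,\nu}(z)}$ and then differentiate in $s$. Your analytic-continuation argument, which extends the L\'{e}vy--Khintchine identity from a neighborhood of $0$ to all of $\mathbb{D}_r$, supplies a justification that the paper uses without comment, both here and in Theorem~\ref{thm:generator_Levy_process}.
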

\begin{proof}
According to a proof of Theorem \ref{thm:generator_Levy_process}, we get
\begin{align*}
\frac{\partial}{\partial s} \Phi_{-1} \bigl(F(s,\cdot)\bigr)(z)
& =\frac{\partial}{\partial s} \Phi_{-1}(A)(z) \Phi_{-1}\bigl(M_{X(s)}\bigr)(z)\\
&=\Phi_{-1}(A)(z)  \left(\frac{\partial}{\partial s} \exp\bigl(s\eta_{\gamma,a,\nu}(z)\bigr) \right)\\
&=\Phi_{-1}(A)(z) \eta_{\gamma,a,\nu}(z) \exp\bigl(s\eta_{\gamma,a,\nu}(z) \bigr) \\
&= \eta_{\gamma,a,\nu}(z) \Phi_{-1}(A)(z) \Phi_{-1}\bigl(M_{X(s)}\bigr)(z)\\
&=\eta_{\gamma,a,\nu}(z) \Phi_{-1}\bigl(F(s,\cdot)\bigr)(z).
\end{align*}
The desired result is obtained.
\end{proof}

\begin{example}
Let $\{B(s)\}_{s\ge0}$ be the one-dimensional standard Brownian motion. Its L\'{e}vy measure is the zero measure, and hence $\mathfrak{M}= \{M_{B(s)}\}_{s\ge0} \subset  \mathcal{A}_r^{(-1)}$ for all $r>0$.  and $\mathfrak{M}= \{M_{B(s)}\}_{s\ge0} \subset  \mathcal{A}_r^{(-1)}$. By Theorem \ref{thm:generator_Levy_process}, we have
$$
\mathcal{L}_r^{(-1)}[\mathfrak{M}] = \Phi_{-1}^{\langle -1\rangle} \circ \mathbf{M}_{z^2/2} \circ \Phi_{-1}.
$$ 
Moreover, by Corollary \ref{cor:f-p}, the function $F(s,z) = \bigl(A \boxplus^{-1} M_{B(s)}\bigr)(z)$ satisfies the {\it $(-1,r)$-deformed heat equation}:
\begin{align*}
\frac{\partial}{\partial s} \Phi_{-1}\bigl(F(s,\cdot)\bigr)(z)
=
\frac{z^2}{2}\,\Phi_{-1}\bigl(F(s,\cdot)\bigr)(z), \quad \text{on} \quad \mathbb{R}_{>0}\times \mathbb{C},
\end{align*}
which can be viewed as a deformation of the classical heat equation.
\end{example}

\subsection*{Acknowledgement}
ST was supported by JSPS KAKENHI Grant Number JP23H00081. 
YU was supported by JSPS KAKENHI Grant Number JP22K13925.

\bibliographystyle{amsplain}

\bibliography{bibfile}

\vspace{8mm}

\hspace{-6mm}{\bf Shuhei Tsujie}\\
Faculty of Education, Department of Mathematics, Hokkaido University of Education, 9 Hokumon-cho, Asahikawa, Hokkaido 070-8621, Japan\\
E-mail: tsujie.shuhei@a.hokkyodai.ac.jp\\

\vspace{3mm}

\hspace{-6mm}{\bf Yuki Ueda}\\
Faculty of Education, Department of Mathematics, Hokkaido University of Education, 9 Hokumon-cho, Asahikawa, Hokkaido 070-8621, Japan\\
E-mail: ueda.yuki@a.hokkyodai.ac.jp

\end{document}